\newtheorem{prop}{Proposition}
\newtheorem{remark}{Remark}
\newcommand{\TheTitle}{Weak and Strong Solutions to the Inverse-Square Brachistochrone Problem on Circular and Annular Domains} 
\newcommand{\TheAuthors}{Christopher Grimm and John A. Gemmer}
\headers{\TheTitle}{\TheAuthors}
\title{{\TheTitle}}
\author{
  Christopher Grimm\thanks{Division of Applied Mathematics, Brown University, Providence, RI 02906, USA (\email{christopher\textunderscore grimm@brown.edu})}
  \and
  John A. Gemmer\thanks{Division of Applied Mathematics, Brown University, Providence, RI 02906, USA}
}
\begin{document}

\maketitle

\begin{abstract}
  In this paper we study the brachistochrone problem in an inverse-square gravitational field on the unit disk. We show that the time optimal solutions consist of either smooth strong solutions to the Euler-Lagrange equation or weak solutions formed by appropriately patched together strong solutions. This combination of weak and strong solutions completely foliates the unit disk. We also consider the problem on annular domains and show that the time optimal paths foliate the annulus. These foliations on the annular domains converge to the foliation on the unit disk in the limit of vanishing inner radius.
\end{abstract}

\begin{keywords}
  brachistochrone problem, calculus of variations of one independent variable, eikonal equation, geometric optics
\end{keywords}

\begin{AMS}
  49K05, 49K30, 49S05
\end{AMS}

\section{Introduction}
In 1696 Johann Bernoulli posed the following problem: given two points $A, B$, find a curve connecting $A$ and $B$ such that a particle travelling from $A$ to $B$ under the influence of a uniform gravitational field takes the minimum time.  This is called the \emph{brachistochrone problem}, from the Greek terms \emph{brachistos} for shortest and \emph{chronos} for time.  It was solved the following year by Leibniz, L'Hospital, Newton, and others \cite{dunham1990journey}.  While the solution to the brachistochrone problem has limited applications, the techniques from calculus used to solve it were novel at the time.  Namely, rudimentary techniques from what would later be called the calculus of variations were developed.   Euler and Lagrange later formalized these initial approaches into their celebrated necessary conditions for optimality, what we now call the Euler-Lagrange equations.  For certain types of functionals, this approach reduces the optimization problem to solving a differential equation, i.e. the Euler-Lagrange equation corresponding to the functional.  Indeed, the reduction of an optimization problem to that of solving a differential equation can be directly applied to many classical optimization problems such as the isoperimetric problem \cite{blaasjo2005isoperimetric}, determining the shape of a minimal surface \cite{sagan2012introduction, oprea2000mathematics} and calculating the path of a geodesic on a surface \cite{mccleary2012geometry}.  Moreover, in classical mechanics the dynamics of a system can be derived using the Euler-Lagrange equations to extremize the so called ``action'' of the system \cite{goldstein2014classical}. This approach to classical mechanics is equivalent to Newtonian mechanics but leads to deeper insights which are critical to our current mathematical understanding of quantum mechanics, general relativity, and other branches of physics.

While the Euler-Lagrange equations have been very successfully applied to many problems in engineering and physics they do not provide the complete picture. In particular, as necessary conditions for optimality their derivation implicitly assumes existence and smoothness of a minimum.  In modern mathematics and applications these assumptions are naive.  For instance, many problems in continuum mechanics have minimizers which lack enough regularity to be classical solutions to the Euler-Lagrange equations \cite{muller1999variational}. The existence of these non-standard solutions is not simply a mathematical curiosity but can be realized in practice as the blister and herringbone patterns in compressed thin sheets \cite{ortiz1994morphology, song2008analytical}, branched domain structures in ferromagnets \cite{DeSimone2000},  self-similar patterns in shape memory alloys \cite{bhattacharya2003microstructure}, the network of ridges in crumpled paper \cite{witten2007stress}, and even the fractal like patterns in leaves and torn elastic sheets \cite{audoly2003self, sharon2007geometrically, gemmer2016isometric}. To understand such systems, local solutions of the Euler-Lagrange equations must be ``patched together'' along singularities in a manner that is consistent with the overall variational structure of the problem; see \cite{kohn2006energy} for an introduction to this approach. 

In this paper our focus is more modest. Namely, we study the problem of determining brachistochrone solutions for particles falling in an inverse-square gravitational field. This problem has been studied in \cite{parnovsky1998, Tee99, Gemmer2011} using standard techniques from the calculus of variations. However, in these works they only considered ``strong'' solutions to the Euler-Lagrange equations which limits the scope of the optimal paths considered. In particular, in \cite{parnovsky1998, Tee99, Gemmer2011} it was shown that there is a ``forbidden'' region through which strong solutions to the Euler-Lagrange equations do not penetrate. In this paper, we show that by considering appropriate ``weak'' solutions constructed from strong solutions patched together at the singular origin of the gravitational field, that the full space of optimal paths is more robust. In particular, these solutions enter the forbidden region and are characteristics for the Hamilton-Jacobi equation. This lends credence that our solutions are the natural extensions of the strong solutions that penetrate the forbidden region and are optimal. Moreover, we also consider the inverse-square problem on an annular domain. Using variational inequalities, we show that our weak solutions are obtained in the limit as the inner radius of the annulus vanishes. 

The paper is organized as follows. In Section 2 we outline the general framework of brachistochrone problems, present the strong and weak versions of the Euler-Lagrange equations, and draw a connection to geometric optics using results from optimal control theory. In Section 3 we restrict our focus to the case of inverse-square gravitational field. We first briefly reproduce the results in \cite{parnovsky1998, Tee99}, namely that there exists a forbidden region through which strong solutions cannot penetrate. Next we present our construction of weak solutions that penetrate into this region. In Section 4 we take a pragmatic approach and consider the problem on an annular domain that excises the singularity at the origin. In doing so, we prove that under the assumption that the strong solutions are global minimizers outside of the forbidden region, that our weak solutions are time optimal. We conclude with a discussion section.





\section{Mathematical framework and governing equations}

\subsection{Mathematical framework}
In this section we summarize the essential definitions and equations which we use to study brachistochrone problems in generic settings. First, let $V:\mathbb{R}^{n}\mapsto \mathbb{R}$ be the potential for a gravitational field, i.e. $V$ is a smooth function except possibly at isolated singularities. Suppose $A,B\in \mathbb{R}^n$ satisfy $V(A)>V(B)$ and there exists a smooth curve $\alpha:[0,1] \mapsto \mathbb{R}^n$ satisfying $\alpha(0)=A$, $\alpha(1)=B$ and for all $0 \leq s \leq 1$, $V(\alpha(s))\leq V(A)$. Now, for a particle released in the gravitational field and constrained to fall along $\alpha$, it follows that if friction is neglected mechanical energy is conserved along the path:
\begin{equation}
\left| \alpha^{\prime}(s) \right|^2 \left(\frac{ds}{dt}\right)^2 +V(\alpha(s))=V(A), \label{eqn:Mech_Energy}
\end{equation}
where $t$ denotes time travelled on $\alpha$ and we have absorbed the standard factor of $1/2$ in the kinetic energy into the potential. The total time of flight to $B$ can then be directly computed:
\begin{equation}
T[\alpha]=\int_0^1 \frac{|\alpha^{\prime}(s)|}{\sqrt{V(A)-V(\alpha(s))}}\,ds. \label{eqn:Gen_Tof}
\end{equation}
This time of flight is still well defined if instead of smooth functions we consider \emph{absolutely continuous functions} for which $V(\alpha)\leq V(A)$ \footnote{The space of absolutely continuous functions from $[0,1]$ into $\mathbb{R}^n$ consists of all functions for which there exists a Lebesgue measurable function $\beta:[0,1]\mapsto \mathbb{R}^n$ satisfying
\begin{equation*}
\alpha(s)=\alpha(0)+\int_{0}^s \beta(\bar{s})\,d\bar{s}
\end{equation*}
and is denoted by $AC([0,1];\mathbb{R}^n)$ \cite{leoni2009first}. For $\alpha\in AC([0,1];\mathbb{R}^n)$  the (weak) notion of the first derivative is defined by $\alpha^{\prime}(s)=\beta(s)$.}. That is, we define the \emph{admissible set} $\mathcal{A}$ by
  \begin{equation}
\mathcal{A}=\{ \alpha \in AC\left(\left[0,1\right]; \mathbb{R}^n\right): \alpha(0)=A,\, \alpha(1)=B \text{ and } V(\alpha(s))\leq V(A)\}
\end{equation}
and define the functional $T:\mathcal{A}\mapsto \mathbb{R}$ by Eq. (\ref{eqn:Gen_Tof}). The generalized brachistochrone problem for the potential $V$ is to find a curve $\alpha^{*}\in \mathcal{A}$ that minimizes the time of flight to $B$. We call such curves \emph{brachistochrone solutions} for the the potential $V$.

The contours, i.e. the equipotential curves, of $V$ naturally partition $\mathbb{R}^n$ into domains $U(A)=\{\mathbf{x}\in \mathbb{R}^n: V(A)-V(\mathbf{x})\geq 0\}$ that contain points that (possibly) can be reached by brachistochrone solutions. For example, for the uniform gravitational potential $V:\mathbb{R}^2\mapsto \mathbb{R}$ defined by $V(x,y)=-y$ a particle released at the point $A=(0,0)$ can only reach points in the  set $U(A)=\{(x,y)\in \mathbb{R}^2: y\leq 0\}$. To completely solve the brachistochrone problem for this potential one is naturally led to the question of finding all brachistochrone solutions that foliate $U(A)$. 

\subsection{Euler-Lagrange equations for brachistochrone problems}
We now follow classical techniques presented in \cite{sagan2012introduction} to derive the Euler-Lagrange equations for Eq. (\ref{eqn:Gen_Tof}). First, suppose $\inf_{\alpha\in \mathcal{A}}T[\alpha] <\infty$ and $\alpha^* \in \mathcal{A}$ satisfies $T[\alpha^*]=\inf_{{\alpha}\in \mathcal{A}}T[\alpha]$, i.e. $\alpha^*$ is a minimizer. Since $\alpha^* \in AC([0,1];\mathbb{R}^n)$ and $T[\alpha^*]<\infty$ it follows that the set of points in $[0,1]$ for which $V(\alpha^*(s))=A$ has Lebesgue measure zero. Consequently, if we further assume that $V(\alpha^*(s))=V(A)$ only at $s=0$ and possibly at $s=1$ if the terminal point satisfies $V(B)=V(A)$, then for all $\eta\in C_0^{\infty}([0,1]; \mathbb{R}^n)$\footnote{ $C_0^{\infty}([0,1]; \mathbb{R}^n)$ denotes the space of smooth functions from $[0,1]$ into $\mathbb{R}^n$ with compact support \cite{royden2010real}.} there exists $\bar{h} > 0$ such that $|h|<\bar{h}$ implies $\alpha^{*}+h\eta \in \mathcal{A}$. Define the function $f:[-\bar{h},\bar{h}]\mapsto \mathbb{R}$ by $f(x)=T[\alpha^{*}+h\eta]$. From the regularity assumptions on $V$ and $\alpha^*$ it follows that $f$ is first differentiable in $h$ and consequently since $\alpha^*$ minimizes $T$ it follows that $f^{\prime}(0)=0$. Therefore, we have the following necessary condition for optimality:
\begin{equation}\label{Eq:BrachWeak}
\begin{aligned}
f^{\prime}(0) & =\int_0^{1} \frac{{\alpha^{*}}^{\prime}(s)}{|{\alpha^{*}}^{\prime}(s)| \sqrt{V(A)-V(\alpha^*(s))}}\cdot \eta^{\prime}(s)\,ds 
\\&\,\,\,\,\,\,
+ \frac{1}{2} \int_0^1 \frac{|{\alpha^{*}}^{\prime}(s)|}{(V(A)-V(\alpha^*(s)))^{3/2}} \nabla V (\alpha^*(s))\cdot \eta \,ds,
\end{aligned}
\end{equation}
which must be satisfied for all $\eta\in C_0^{\infty}([0,1];\mathbb{R}^n)$ \cite{evans1998partial}. Eq. (\ref{Eq:BrachWeak}) is known as the \emph{weak formulation of the Euler-Lagrange equations for the brachistochrone problem}. If we further assume that the minimizing curve $\alpha^*$ is second differentiable then Eq. (\ref{Eq:BrachWeak}) can be integrated by parts to yield
\begin{equation}
\begin{aligned}
0 &= \int_0^1\left(\frac{1}{2} \frac{|{\alpha^{*}}^{\prime}(s)|}{(V(A)-V(\alpha^{*}(s)))^{3/2}} \nabla V (\alpha^{*}(s))\right.
\\&\,\,\,\,\,\,
- \left. \frac{d}{ds} \left(\frac{{\alpha^{*}}^{\prime}(s)}{|{\alpha^{*}}^{\prime}(s)| \sqrt{V(A)-V(\alpha^{*}(s))}}\right)\right)\cdot \eta \,ds.
\end{aligned}
\end{equation}
By the so called ``fundamental theorem of the calculus of variations'' it follows that since $\eta$ was arbitrary the necessary condition satisfied by a second differentiable curve $\alpha^{*}$ is the following differential equation \cite{sagan2012introduction}:
\begin{equation}\label{Eq:BrachStrong}
0= \frac{1}{2}\frac{|{\alpha^{*}}^{\prime}(s)|}{(V(A)-V(\alpha^{*}(s)))^{3/2}} \nabla V (\alpha^{*}(s)) -\frac{d}{ds} \left(\frac{{\alpha^{*}}^{\prime}(s)}{|{\alpha^{*}}^{\prime}(s)| \sqrt{V(A)-V(\alpha^{*}(s))}}\right).
\end{equation}
Eq. (\ref{Eq:BrachStrong}) is known as the \emph{strong formulation of the Euler-Lagrange equations for the brachistochrone problem}. 

Note, however, that in deriving the strong formulation of the Euler-Lagrange equations we made the additional assumption that $\alpha^*$ is second differentiable. In many applications this assumption is too restrictive. For example, the functional $J:AC([-1,1];\mathbb{R})\mapsto \mathbb{R}$ defined by $J[y]=\int_{-1}^{1}\left(1-{y^{\prime}(s)}^2\right)^2\,ds$ with boundary conditions $y(-1)=y(1)=1$ is minimized by $y(x)=|x|$. In this example the two strong solutions $y=x$ and $y=-x$ are joined together at $x=0$. However simply gluing together two strong solutions does not guarantee that the resulting combination is a weak solution. If $\alpha^*$ is second differentiable everywhere except at a point $c\in (0,1)$ and satisfies Eq. (\ref{Eq:BrachStrong}) away from $c$, we can integrate Eq. (\ref{Eq:BrachWeak}) by parts to obtain the following necessary condition
\begin{equation}\label{Eqn:WE1}
\lim_{s \rightarrow c^{-}}\left(\frac{{\alpha^{*}}^{\prime}(s)}{|{\alpha^{*}}^{\prime}(s)| \sqrt{V(A)-V(\alpha(s))}}\right)= \lim_{s\rightarrow c^{+}}\left(\frac{{\alpha^{*}}^{\prime}(s)}{|{\alpha^{*}}^{\prime}(s)| \sqrt{V(A)-V(\alpha(s))}}\right).
\end{equation}
Eq. (\ref{Eqn:WE1}) is commonly called the Weierstrass-Erdmann corner condition \cite{sagan2012introduction} and must be satisfied by piecewise smooth solutions of Eq. (\ref{Eq:BrachStrong}).

 We now make some additional comments about the Weierstrass-Erdmann corner conditions which will be relevant to the discussion in later sections. First, away from a singularity in the potential $V$, i.e. if we assume that $(V(A)-V(c))^{-1/2} \neq 0$, Eq. (\ref{Eqn:WE1}) corresponds to continuity of the tangent vector $\alpha^{\prime}(s)$ at $c$. Moreover, away from singularities this condition physically corresponds to conservation of classical momentum at $c$.  However, if $(V(A)-V(c))^{-1/2}=0$ this necessary condition is trivially satisfied. That is, at a singularity in the gravitational field a minimizer could violate conservation of momentum. This result should not be surprising since at a singularity $V(c)=\infty$ implying that the instantaneous speed as well as the acceleration of the particle is infinite. This fact will be critical in our later construction of weak brachistochrone solutions in an inverse-square gravitational field. 

\subsection{Connection with geometrical optics through control theory}
While directly solving the Euler-Lagrange equations given by Eq. (\ref{Eq:BrachStrong}) will solve the brachistochrone problem,  there is another approach that was originally taken by Johann Bernoulli. Namely, Bernoulli realized that the brachistochrone problem is equivalent to finding the path traced out by a ray of light in a medium with index of refraction $n(\mathbf{x})=(V(A)-V(\mathbf{x}))^{-1/2}$. His solution method was prescient in that it applied Snell's law of refraction to what would now be called a finite element approximation to the problem with a piecewise linear basis \cite{erlichson1999johann, sussmann1997300}. The connection to geometrical optics was later exploited by Hamilton and finalized by Jacobi to derive what we now called the Hamilton-Jacobi equations for a variational problem  \cite{broer2014bernoulli, sussmann1997300, nakane2002early}. Specifically, the Hamilton-Jacobi equation is a quasilinear partial differential equation whose characteristic equations are precisely the Euler-Lagrange equations for the system \cite{evans1998partial}. In particular, the Hamilton-Jacobi equation governs the dynamics of wave-fronts propagating in a medium with index of refraction $n(\mathbf{x})$ and the Euler-Lagrange equations are the evolution equations for the normals to the wavefronts. 

We will now show how the geometric optics interpretation of the brachistochrone problem can be directly derived using modern optimal control theory. To reinterpret the brachistochrone problem as a control problem we follow \cite{sussmann1997300} and first define the set of admissible controls by 
\begin{equation}
\mathcal{U}=\{\mathbf{u}:[0,\mathbb{R})\mapsto \mathbb{R}^n: \mathbf{u} \text{ is piecewise smooth and } |\mathbf{u}| =1\}
\end{equation}
and to satisfy Eq. (\ref{eqn:Mech_Energy}) we constrain the dynamics of the system by
\begin{equation} \label{Eq:Trajectory}
\dot{\alpha}=\sqrt{V(A)-V(\mathbf{x})}\mathbf{u}.
\end{equation}
We define the \emph{trajectory of a control} to be the curve $\alpha$ defined by Eq. (\ref{Eq:Trajectory}) and also define $\mathcal{T}_B:\mathcal{U}\mapsto \mathbb{R}\cup \{+\infty\}$ to be the first time a trajectory corresponding to a control $\mathbf{u}$ reaches the point $B$.  The optimal control problem corresponding to the brachistochrone problem is to find $\mathbf{u}^*\in \mathcal{U}$ that steers a trajectory $\alpha(t)$ to a point $B\in U(A)$ in the minimal amount of time. That is, find $\mathbf{u}^*\in \mathcal{U}$ such that $T_B[\mathbf{u}^*]=\inf_{\mathbf{u} \in \mathcal{U}}\mathcal{T}_B[\mathbf{u}]$. Clearly, this optimal control problem is equivalent to our previous formulation of the brachistochrone problem and $\inf_{\mathbf{u} \in \mathcal{U}}\mathcal{T}_B[\mathbf{u}]=\inf_{\alpha \in \mathcal{A}} T[\alpha]$.

One technique for solving such an optimal control problem is to apply Bellman's technique of dynamic programming \cite{bertsekas1995dynamic}. Namely, if we define the \emph{value function} $\mathcal{V}:U(A)\mapsto \mathbb{R}$ by 
\begin{equation}
\mathcal{V}(\mathbf{x})=\inf_{\mathbf{u}\in \mathcal{U}}\mathcal{T}_{\mathbf{x}}[\mathbf{u}] 
\end{equation}
then the dynamic programming principle states that for $\Delta t>0$ sufficiently small
\begin{equation}
\mathcal{V}(\mathbf{x})=\min_{\substack{\mathbf{u}\in \mathcal{U}\\ 0<s<\Delta t}}\left \{ \mathcal{V}(\bar{\alpha}(\Delta t))+\Delta t \right\},
\end{equation}
where $\bar{\alpha}$ corresponds to the \emph{time reversed} solution of Eq. (\ref{Eq:Trajectory}) with initial condition $\bar{\alpha}(0)= \mathbf{x}$ and control $\mathbf{u}$. If we assume that $\mathcal{V}$ is smooth we can formally Taylor expand:
\begin{equation*}
\mathcal{V}(\mathbf{x})=\min_{|\mathbf{u(0)}|=1}\left \{\mathcal{V}(\mathbf{x}) +\nabla \mathcal{V}(\mathbf{x}) \sqrt{V(A)-V(\mathbf{x})}\mathbf{u}(0)\Delta t +\Delta t +\mathcal{O}(\Delta t^2) \right \}.
\end{equation*}
Consequently, taking the limit as $\Delta t \rightarrow 0$ we obtain
\begin{equation*}
-1 =\min_{|\mathbf{u(0)}|=1}\nabla \mathcal{V}(\mathbf{x}) \sqrt{V(A)-V(\mathbf{x})}\mathbf{u}(0).
\end{equation*}
Finally, this minimum is obtained by $\mathbf{u}(0)=-\nabla \mathcal{V}(\mathbf{x})/|\mathcal{V}(\mathbf{x})|$ and hence we can conclude that  the value function $\mathcal{V}$ satisfies the following partial differential equation
\begin{equation}\label{Eq:Eikonal}
|\nabla \mathcal{V}|^2=\frac{1}{V(A)-V(\mathbf{x})}=n^2(\mathbf{x}).
\end{equation}

In geometrical optics, Eq. (\ref{Eq:Eikonal}) is an eikonal equation for a medium with index of refraction $n(\mathbf{x})$. That is, the level sets of solutions to Eq. (\ref{Eq:Eikonal}) correspond to wave fronts for light traveling through the medium and the light rays correspond to curves that are everywhere tangent to the normals of the level sets of $\mathcal{V}$. Consequently, if we let $\beta(s)$ be an arc-length parameterization of such a light ray it follows that
\begin{equation}\label{Eq:Ray1}
\nabla \mathcal{V}(\beta(s))=n(\beta(s)) \frac{d \beta}{ds}.
\end{equation}
Differentiating with respect to $ds$ and switching the order of differentiation, it follows that
\begin{equation*}
\nabla \left( \nabla \mathcal{V}(\beta(s)) \cdot \frac{d \beta}{ds}\right)=\frac{d}{ds} \left( n(\beta(s)) \frac{d\beta}{ds}\right).
\end{equation*}
Therefore, it follows from Eq. (\ref{Eq:Eikonal}) and (\ref{Eq:Ray1}) that the governing equation for the rays is 
\begin{equation}\label{Eq:RayEquation}
\nabla n(\beta)= \frac{d}{ds}\left( n(\beta(s)) \frac{d\beta}{ds}\right).
\end{equation}
Finally, it follows immediately that Eq. (\ref{Eq:RayEquation}) is simply a version of Eq. (\ref{Eq:BrachStrong}) that is parametrized by arc-length. That is, to solve the brachistochrone problem we could, in principle, solve the eikonal equation (\ref{Eq:Eikonal}) and use Eq. (\ref{Eq:Ray1}) to reconstruct the brachistochrone solution. More importantly, we could solve the brachistochrone problem directly by solving the Euler-Lagrange equations (\ref{Eq:BrachStrong}) and compute the time of flight along these solution curves to compute the solution to the eikonal equation (\ref{Eq:Eikonal}). 

\section{Brachistochrone Problem in an Inverse-Square Gravitational Field}

\subsection{Framework}
Consider two points $A, B \in \mathbb{R}^2$. For an inverse-square field, the potential $V: \mathbb{R}^2 \mapsto \mathbb{R}$ is defined by $V(x) = -|x|^{-2}$. The inverse-square brachistochrone problem is to construct a curve connecting $A$ and $B$ such that a particle traversing the curve from $A$ to $B$ under the influence a gravitational field centered at the origin with potential $V$ has the least time of flight. In this case the admissable set $\mathcal{A}_0$ is defined by
\begin{equation} \label{eq:admissible_set}
\mathcal{A}_{0}=\left\{\alpha\in AC([0,1]; \mathbb{R}^2): \alpha(0)=A, \alpha(1)=B \text{ and } \forall t \in [0,1], |\alpha(t)|\leq |A|\right\}
\end{equation}
and the time of flight $T : \mathcal{A}_0\mapsto \mathbb{R}^+$ is given by: 
\begin{equation} \label{eq:tof}
    T[\alpha] = \int_{0}^{1}{\frac{|\alpha^\prime(s)|}{\sqrt{|\alpha(s)|^{-2} - |A|^{-2}}} ds}. 
\end{equation}
Again, this functional arises from classical conservation of mechanical energy and the constraint $|\alpha(s)| \leq |A|$ -- a necessary condition for this functional to be well defined -- is equivalent to the condition that the particle cannot gain mechanical energy. 

To study minimizers of Eq. (\ref{eq:tof}) it is natural to work in a polar-coordinate representation of the form
\begin{equation} \label{eq:alpha_polar}
\alpha(s) = \left(r\left(s\right)\cos\left(\theta(s)\right), r\left(s\right)\sin\left(\theta(s)\right)\right),
\end{equation}
where $r: [0, 1] \to \left[0, |A|\right]$ and $\theta: [0, 1] \to [-\pi, \pi]$ are (weakly) differentiable functions satisfying $r(0) = |A|$, $r(1) = |B|$, $\theta(0) = \theta_0$, $\theta(1) = \theta_f$, with $\theta_0$, $\theta_f$ the angular coordinates of $A, B$ respectively; see Fig. \ref{fig:basic_curve}(a). By rotational symmetry and radial invariance we can assume without loss of generality that $A = (1, 0)$; see Fig. \ref{fig:basic_curve}(b). In this representation Eq. \eqref{eq:tof} becomes
\begin{equation} \label{eq:T_functional}
T[r, \theta] = \int_{0}^{1}{\sqrt{\frac{r^\prime(s)^2 + r(s)^2\theta^\prime(s)^2}{r(s)^{-1}- 1}} ds} = \int_{0}^{1}{L_2\left(r(s), r^\prime(s), \theta^\prime(s)\right) ds},
\end{equation}
where $L_2:\mathbb{R}^3 \to \mathbb{R}$ denotes the Lagrangian for this functional. To reduce encumbering notation we write $(r(s),\theta(s))\in \mathcal{A}_0$ as a proxy for the statement that there exists $\alpha \in \mathcal{A}_0$ with corresponding radial and angular components $r(s)$ and $\theta(s)$ respectively. 

\begin{figure}[ht!]
\centering
\begin{subfigure}[b]{0.45\textwidth}
\centering
\includegraphics[height=.75\textwidth]{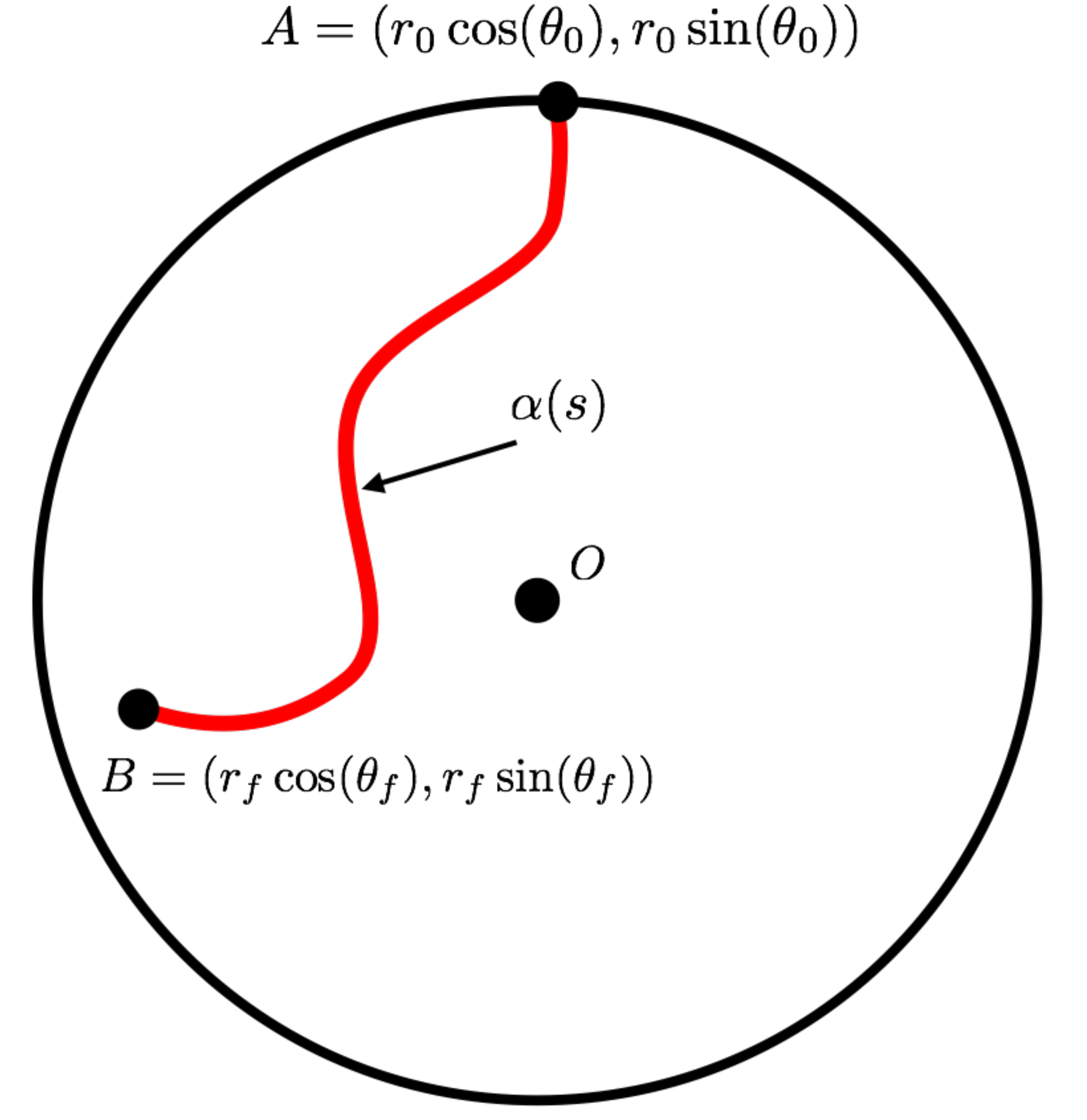}
\caption{}
\end{subfigure}
~
\begin{subfigure}[b]{0.45\textwidth}
\centering
\includegraphics[height=.75\textwidth]{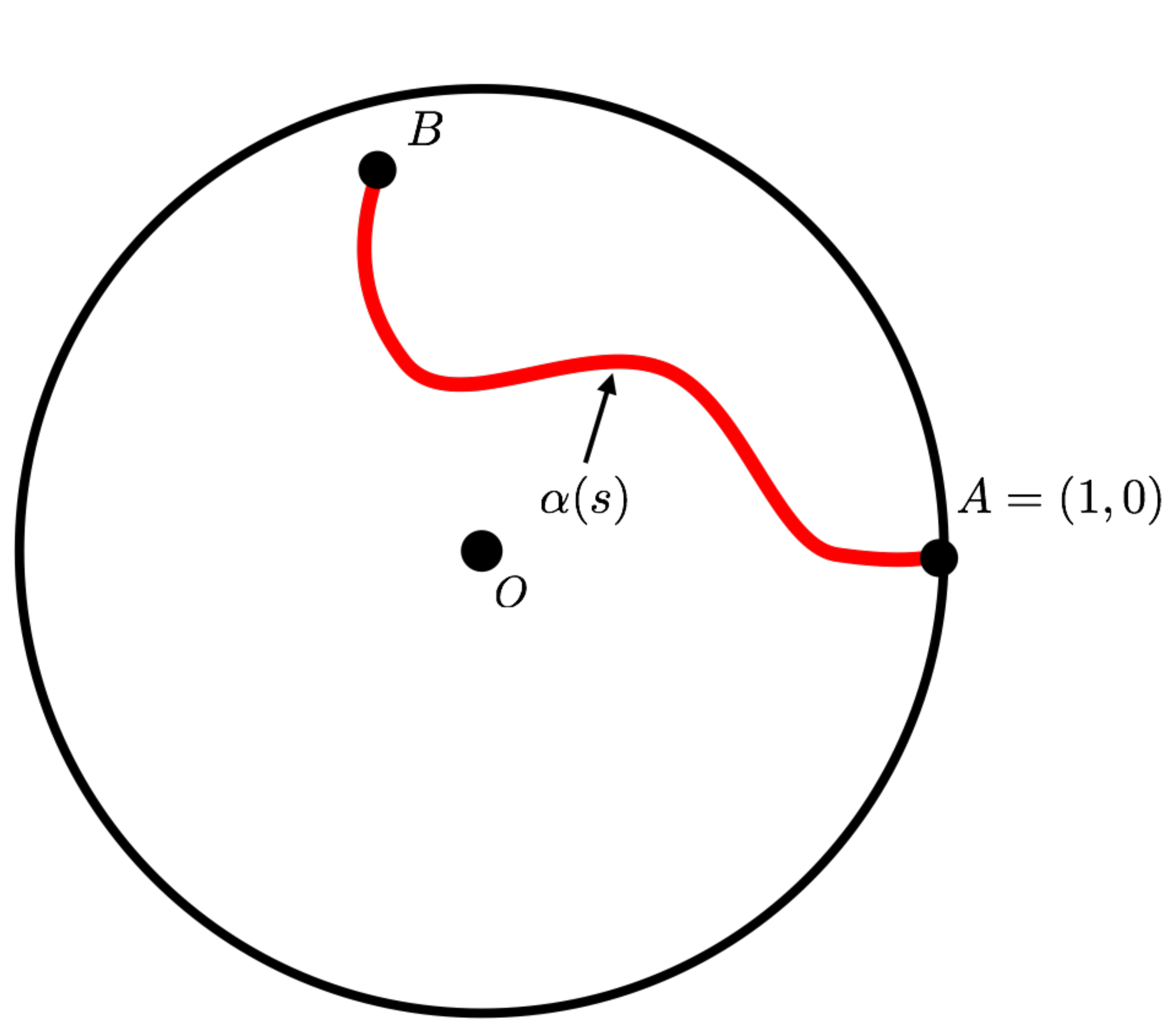}
\caption{}
\end{subfigure} 
\caption{(a) Plot of a curve $\alpha:[0,1]\mapsto \mathbb{R}^2$ connecting $A=(r_0\cos(\theta_0),r_0\sin(\theta_0))$ to $B=(r_f\cos(\theta_f),r_f\sin(\theta_f))$ in an inverse-square gravitational field centered at the origin $O$. The circle of radius $r_0$ is an equipotential for the inverse-square gravitational field. For a particle falling along this curve, conservation of mechanical energy requires that $|\alpha(s)|\leq r_0$. (b) By rotational and radial scale invariance of this problem we can assume without loss of generality that $A=(1,0)$.}   \label{fig:basic_curve}
\end{figure}

We now deduce geometric properties of minimizers using the structure of the Lagrangian. We first show that if $(r^*,\theta^*)\in \mathcal{A}_0$ minimizes $T$ then $\theta^*$ must be a monotone function. This property prevents minimizers from ``turning back'' to its starting point. The idea of the proof is to construct for all $(r,\theta)\in \mathcal{A}_0$ a modified curve $(r,\overline{\theta})\in \mathcal{A}$  with $\overline{\theta}$ monotone in $s$ and show $T[r,\overline{\theta}]\leq T[r,\theta]$.  

\begin{prop}  \label{prop:monotonicity} If $(r^*(s),\theta^*(s))\in \mathcal{A}_0$ minimizes $T$ then $\theta^*$ is monotone in $s$.
\end{prop}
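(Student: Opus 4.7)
The plan is to argue by direct competitor construction, exactly along the lines hinted at in the statement. Suppose $(r^*, \theta^*) \in \mathcal{A}_0$ is a minimizer and suppose for contradiction that $\theta^*$ fails to be monotone on $[0,1]$. The structural feature to exploit is that the Lagrangian $L_2(r, r', \theta')$ in Eq. \eqref{eq:T_functional} depends on $\theta$ only through $(\theta')^2$ and is monotone increasing in $(\theta')^2$. So it suffices to construct an admissible $\bar{\theta}$ which is monotone, shares the boundary values of $\theta^*$, and satisfies $|\bar{\theta}'(s)| \leq |\theta^{*\prime}(s)|$ almost everywhere; the same $r^*$ will be reused.

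To build such a competitor I would set $f(s) = \int_0^s |\theta^{*\prime}(t)|\,dt$, which is AC, non-decreasing, with $f(0)=0$ and $f(1) \geq |\theta_f - \theta_0|$ by the triangle inequality, the inequality being strict precisely when $\theta^*$ is non-monotone. If $f(1)=0$ then $\theta^*$ is constant and the conclusion is immediate, so I may assume $f(1) > 0$ and define
$$
\bar{\theta}(s) \;=\; \theta_0 + \frac{\theta_f - \theta_0}{f(1)}\,f(s).
$$
By construction $\bar{\theta}$ is AC, monotone (inherited from $f$), meets the boundary conditions, and the pointwise bound $|\bar{\theta}'(s)| = c\,|\theta^{*\prime}(s)|$ holds a.e. with $c := |\theta_f - \theta_0|/f(1) \leq 1$. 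The curve $\bar{\alpha}(s) = (r^*(s)\cos\bar{\theta}(s), r^*(s)\sin\bar{\theta}(s))$ is AC with $|\bar{\alpha}(s)| = r^*(s) \leq 1$, so $(r^*, \bar{\theta}) \in \mathcal{A}_0$. Plugging $\bar{\theta}$ into Eq. \eqref{eq:T_functional} and comparing integrands pointwise yields $T[r^*, \bar{\theta}] \leq T[r^*, \theta^*]$ immediately, and strict inequality whenever $c < 1$ on a set of positive measure where $r^*(s)\,\theta^{*\prime}(s) \neq 0$.

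The main obstacle is the degenerate possibility that $\theta^{*\prime}\neq 0$ only on the set $\{r^* = 0\}$, where the Lagrangian is insensitive to changes in $\theta$ (the factor $r^{*2}(\theta')^2$ vanishes and, further, $(r^{*-1}-1)^{-1/2} \to 0$). In that case the physical curve $\alpha^*$ traced out by the original and by $\bar{\theta}$ coincide on $\{r^*=0\}$ and the times of flight agree, so $\bar{\theta}$ provides a monotone polar representation of the same minimizer. Otherwise strict inequality contradicts optimality of $\theta^*$, giving the proposition. I would conclude by remarking that the sign of $\theta_f - \theta_0$ is irrelevant to the construction, and that the case $\theta_f = \theta_0$ is covered by taking $\bar{\theta} \equiv \theta_0$, which is trivially monotone.
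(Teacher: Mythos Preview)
Your argument is correct and follows the same overall strategy as the paper—build a monotone competitor $\bar\theta$ with the same radial component and compare integrands pointwise—but the competitor you construct is different. The paper sets $\bar\theta(s)=\min\{\theta_f,\sup_{t\le s}\theta(t)\}$, which leaves $\bar\theta'=\theta'$ on a ``good'' set and kills $\bar\theta'$ to zero on its complement; the comparison then splits the integral over those two sets. You instead rescale the total variation, $\bar\theta(s)=\theta_0+c\int_0^s|\theta^{*\prime}|$ with $c=|\theta_f-\theta_0|/\int_0^1|\theta^{*\prime}|\le 1$, so that $|\bar\theta'|=c\,|\theta^{*\prime}|$ uniformly and the pointwise inequality of integrands is immediate without splitting. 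Both routes exploit that $L_2$ depends on $\theta$ only through $(\theta')^2$ and is nondecreasing in it. Your treatment of the degenerate case $\{\theta^{*\prime}\neq 0\}\subset\{r^*=0\}$ is actually more explicit than the paper's: you correctly note that in that situation the Cartesian curve $\alpha^*$ is unchanged, so one merely obtains a monotone polar reparametrization of the same minimizer, which is the right reading of the proposition given the coordinate singularity at the origin. One minor wording slip: $c$ is a constant, so ``$c<1$ on a set of positive measure'' should simply read ``$c<1$ and $r^*\theta^{*\prime}\neq 0$ on a set of positive measure.''
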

\begin{proof} Let $(r(s),\theta(s))\in \mathcal{A}_0$ terminate at the point $(r_f\cos(\theta_f),r_f\sin(\theta_f))$ and assume $\theta_f\geq 0$. Define $\overline{\theta}:[0,1]\mapsto [0,2\pi]$ by $\overline{\theta}(s)=\min\{\theta_f,\sup\{\theta(t): 0\leq t \leq s\}\}$. From absolutely continuity of $\theta$ it follows that $\overline{\theta}$ is absolutely continuous, monotone increasing and satisfies $\overline{\theta}(1)=\theta_f$.  Moreover, there exists a closed set $I$ on which $\overline{\theta}=\theta$ and on open set $\overline{I}=[0,1]\setminus I$ on which $\frac{d\overline{\theta}}{ds}=0$. Therefore,
\begin{align*}
T[r,\overline{\theta}]&=\int_I   \sqrt{ \frac{r^{\prime}(s)^2+r(s)^2\theta^{\prime}(s)^2}{r(s)^{-1}-1}}\,ds+\int_{\overline{I}} \sqrt{ \frac{r^{\prime}(s)^2}{r(s)^{-1}-1}}\,ds            
\\
& \leq \int_I   \sqrt{ \frac{r^{\prime}(s)^2+r(s)^2\theta^{\prime}(s)^2}{r(s)^{-1}-1}}\,ds+\int_{\overline{I}}   \sqrt{ \frac{r^{\prime}(s)^2+r(s)^2\theta^{\prime}(s)^2}{r(s)^{-1}-1}}\,ds\\
&= T[r,\theta],
\end{align*}
with equality if and only if $\theta$ is monotone increasing. Consequently, if $(r^*(s),\theta^*(s))\in \mathcal{A}_0$ minimizes $T$ then $\theta^*$ is monotone increasing in $s$. A similar argument proves that $\theta^*$ must be monotone decreasing if $\theta_f<0$. 
\end{proof}

We now prove that without loss of generality we can assume minimizers must be symmetric about the angle $\theta_f/2$. Specifically, if in polar coordinates $(r^*(s),\theta^*(s))\in \mathcal{A}_0$ minimizes $T$ and terminates at the final point $r_f=1$, $\theta_f$ then the image of $(r(s),\theta(s))$ is symmetric about the line $\theta=\theta_f/2$. Similar to the previous proof, the idea is to modify a curve $\alpha\in \mathcal{A}_0$ by constructing symmetric versions and comparing the time of flight. 

\begin{prop} \label{prop:symmetry} If $(r^*(s),\theta^*(s))\in \mathcal{A}_0$ minimizes $T$ with terminal point $r(1)=1$, $\theta(1)=\theta_f$ then there exists a version of $(r^*(s),\theta^*(s))$ in $\mathbb{R}^2$ that is symmetric about the line $\theta=\theta_f/2$ that also minimizes $T$.
\end{prop}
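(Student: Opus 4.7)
The plan is to prove the symmetry claim by an exchange-and-reflection argument, using the fact that the Lagrangian $L_2$ in Eq.~(\ref{eq:T_functional}) depends on $\theta$ only through $\theta'(s)^2$. Given a minimizer $(r^*,\theta^*)\in\mathcal{A}_0$, Proposition~\ref{prop:monotonicity} tells us $\theta^*$ is monotone, and by symmetry we may assume $0\le\theta_f$ and $\theta^*$ is nondecreasing. By continuity there exists $s^*\in[0,1]$ with $\theta^*(s^*)=\theta_f/2$. Set $r_m=r^*(s^*)$ and split the minimizer into two pieces $\gamma_1=(r^*,\theta^*)|_{[0,s^*]}$ and $\gamma_2=(r^*,\theta^*)|_{[s^*,1]}$, with associated travel times $T_1,T_2$ so that $T[r^*,\theta^*]=T_1+T_2$.

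Next I would define, for any admissible arc $\gamma=(r(s),\theta(s))$ on $[a,b]$, its reflection across the ray $\theta=\theta_f/2$ by $\widehat{\gamma}(s)=(r(s),\theta_f-\theta(s))$. Because $L_2$ depends on the angular variable only through $\theta'(s)^2$ and because $r$ is unchanged by reflection, the time functional is invariant: $T[\widehat{\gamma}]=T[\gamma]$. Moreover admissibility (i.e.\ $|\alpha|\le 1$) and absolute continuity are preserved, since the radial component is untouched. Time of flight is also invariant under orientation reversal. Hence from $\gamma_1$ I can form $\tilde\gamma_1$, the orientation-reversed reflection, which is an admissible arc from $(r_m,\theta_f/2)$ to $(1,\theta_f)$ with time $T_1$; likewise $\tilde\gamma_2$ is an arc from $(1,0)$ to $(r_m,\theta_f/2)$ with time $T_2$.

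I then assemble two symmetric candidates by concatenation: $\beta_1=\gamma_1\ast\tilde\gamma_1$ with total time $2T_1$ and $\beta_2=\tilde\gamma_2\ast\gamma_2$ with total time $2T_2$. Each concatenation is admissible because both halves join continuously at the radius $r_m$ on the line $\theta=\theta_f/2$, and after reparametrizing to $[0,1]$ each is in $\mathcal{A}_0$. By construction both are symmetric under $\theta\mapsto\theta_f-\theta$. Without loss of generality $T_1\le T_2$, so $T[\beta_1]=2T_1\le T_1+T_2=T[r^*,\theta^*]$. Since $(r^*,\theta^*)$ is a minimizer the inequality must be an equality, which forces $T_1=T_2$ and shows that $\beta_1$ is itself a minimizer that is symmetric about $\theta=\theta_f/2$, proving the proposition.

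The main obstacle I anticipate is the bookkeeping needed to guarantee that the cut-and-paste yields an element of $\mathcal{A}_0$ rather than a merely formal curve: one must check that the reflected piece stays inside the disk of radius $|A|$ (immediate, since $r$ is unchanged), that the concatenation is absolutely continuous at the splice point (immediate from $r(s^*)=r_m$ on both sides and from the absolute continuity of each half after reparametrization to $[0,1]$), and that $\theta^*$ actually attains the intermediate value $\theta_f/2$ (follows from continuity and the intermediate value theorem together with the monotonicity supplied by Proposition~\ref{prop:monotonicity}). Once these points are handled, the reflection symmetry of $L_2$ does all the real work.
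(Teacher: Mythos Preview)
Your proof is correct and follows essentially the same reflection-and-averaging argument as the paper: split the curve at the point where it crosses $\theta=\theta_f/2$, build the two symmetric competitors $\beta_1,\beta_2$ with times $2T_1,2T_2$, and use $2T_1+2T_2=2T[r^*,\theta^*]$ to conclude that the cheaper one is a symmetric minimizer. The only cosmetic difference is that you invoke Proposition~\ref{prop:monotonicity} to locate a single crossing point $s^*$, whereas the paper simply reparametrizes so that $\theta(1/2)=\theta_f/2$; your version is arguably a bit cleaner on this point.
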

\begin{proof}
Let $(r(s),\theta(s))\in \mathcal{A}_0$ and $t(s)$ a reparameterization in which $\theta(1/2)=\theta_f/2$ and if $t>1/2$ then $\theta(t)>\theta_f/2$. Define the two possible reflections of $(r(t),\theta(t))$ about the line $\theta=\theta_f/2$ by
\begin{align*}
r_1(t)&=\begin{cases}
r(t), & 0\leq t \leq \frac{1}{2}\\
r(1-t), & \frac{1}{2} < t \leq 1
\end{cases} \text{ and } \theta_1(t)= \begin{cases}
\theta(t), & 0\leq t \leq \frac{1}{2}\\
\theta_f-\theta(1-t), & \frac{1}{2} < t \leq 1
\end{cases}\\
 r_2(t)&=\begin{cases}
r(1-t), & 0\leq t \leq \frac{1}{2}\\
r(t), & \frac{1}{2} < t \leq 1
\end{cases} \text{ and } \theta_2(t)= \begin{cases}
\theta(1-t)-\theta_f & 0\leq t \leq \frac{1}{2}\\
\theta(t), & \frac{1}{2} < t \leq 1
\end{cases}.
\end{align*}
By construction the images of the curves $(r_1(t),\theta_1(t)$, $(r_2(t),\theta_2(t))$ in $\mathbb{R}^2$ are symmetric about the line $\theta=\theta_f/2$. It follows from symmetry that
\begin{align*}
T[r_1,\theta_1]&=2\int_{0}^{1/2}{\sqrt{\frac{r^{\prime}(s)^2 + r(s)^2\theta^\prime(s)^2}{r(s)^{-1}- 1}} ds} \\
T[r_2,\theta_2]&=2\int_{1/2}^{1}{\sqrt{\frac{r^{\prime}(s)^2 + r(s)^2\theta^\prime(s)^2}{r(s)^{-1}- 1}} ds}. 
\end{align*}
Therefore, $T[r_1,\theta_1]+T[r_2,\theta_2]=2 T[r,\theta]$ from which it follows that
\begin{equation*}
\min\{ T[r_1,\theta_1], T[r_2,\theta_2]\} \leq T[r,\theta]. 
\end{equation*}
Therefore it follows that if $(r^*(s),\theta^*(s))\in \mathcal{A}_0$ minimizes $T$ then either $(r_1(s), \theta_1(s))$ or $(r_2(s), \theta_2(s))$ must also minimize $T$.  
\end{proof}

\subsection{Strong Solutions to Euler-Lagrange Equations}

In this subsection we review the construction of smooth minimizers to $T$ that was originally presented in \cite{parnovsky1998, Tee99}. The classic method for finding time minimizing curves is to derive the Euler-Lagrange equations for $T$ and solve the resulting boundary value problem. Specifically, if we assume there exists a second differentiable curve $\alpha^*(s) \in \mathcal{A}_0$ with angular component $\theta^*(s)$ and radial component $r^*(s)$ that (locally) minimizes $T$ then the resulting boundary value problem is:

\begin{equation} \label{boundary}
    \begin{cases}
    \left. \left(\frac{\partial L}{\partial r} - \frac{d}{ds}\frac{\partial L}{\partial r^\prime} \right) \right|_{(r^*(s), \theta^*(s))} = 0 \\
    \left. \frac{d}{ds} \frac{\partial L}{\partial \theta^\prime} \right|_{(r^*(s) \theta^*(s))} = 0 \\
        r^*(0) = 1, r^*(1) = |B| \\
        \theta^*(0) = 0, \theta(1) = \theta_f
    \end{cases}.
\end{equation}

If we make the assumption that $\theta^{*}$ is a function of $r^*$, i.e. we assume the ansatz $r^*(s) = (|B| - 1)s + 1$, then Eq. \eqref{boundary} reduces to the differential equation: 
\begin{equation}\label{eq:theta_diff_eq}
    \frac{d}{dt}\frac{\partial L}{\partial\theta^{\prime}} = 0.
\end{equation}
Formally, Eq. (\ref{eq:theta_diff_eq}) can be integrated to yield a separable differential equation with solution
\begin{equation} \label{theta_eq}
    \theta^*(r*) = \pm\int_{1}^{r^*}{\sqrt{\frac{2(\frac{1}{u} - 1)D}{u^4 - u^2(2(\frac{1}{u} - 1))D}}du},
\end{equation}
where $D > 0$ is a constant of integration which can be determined from the boundary conditions. The assumption that $\theta^*$ is globally a function of $r^*$ is valid if $\left(\frac{d\theta^*}{dr^*}\right)^{-1}\neq 0$ for all $r^*\in(0,1)$. It follows from Eq. (\ref{theta_eq}) that for fixed $D > 0$ this condition is equivalent to the non-existence of solutions to the equation $r^3+2Dr-2D=0$ for $r\in(0,1]$. The following proposition makes this statement precise and identifies the critical radius $r_c$ in terms of the integration constant $D$. 

\begin{prop} \label{prop:bijection}
For fixed $D>0$, there exists unique $r_c(D)\in [0,1]$ such that $\theta^{*}(r^*)$ defined by Eq. (\ref{theta_eq}) satisfies
\begin{equation*}
\lim_{r\rightarrow r_c(D)^+} \left.\frac{d\theta^*}{dr^*}\right|_{r}=\infty.
\end{equation*}
Moreover, the mapping $D\mapsto r_c(D)$ is a bijection from $(0,\infty)$ into $(0,1)$. 
\end{prop}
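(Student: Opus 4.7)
The plan is to reduce the divergence condition for $d\theta^*/dr^*$ to locating a root of an explicit cubic, and then to invert the resulting algebraic relation between $r_c$ and $D$ by monotonicity. First I would differentiate Eq.~(\ref{theta_eq}) under the integral sign to obtain
\[
\frac{d\theta^*}{dr^*}(r) = \pm\sqrt{\frac{2D(1/r - 1)}{r^4 - 2Dr^2(1/r - 1)}},
\]
and then algebraically simplify the denominator via
\[
r^4 - 2Dr^2\!\left(\tfrac{1}{r}-1\right) = r\bigl(r^3 + 2Dr - 2D\bigr).
\]
Since the numerator $2D(1/r-1)$ and the factor $r$ are both strictly positive on $(0,1)$, the derivative blows up at $r\in(0,1)$ if and only if $f(r):=r^3+2Dr-2D$ vanishes there. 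This is exactly the condition already flagged in the paragraph preceding the proposition.

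Next I would invoke the intermediate value theorem on $f$. Because $f(0)=-2D<0$ and $f(1)=1>0$, a root exists in $(0,1)$, and because $f'(r)=3r^2+2D>0$ for every $r\in\mathbb{R}$, $f$ is strictly increasing, so this root is unique; this defines $r_c(D)\in(0,1)$. Since $f>0$ on $(r_c(D),1]$ and $f(r_c(D))=0$, the one-sided limit
\[
\lim_{r\to r_c(D)^+}\frac{d\theta^*}{dr^*}(r) = +\infty
\]
follows because the denominator under the square root tends to $0^+$ while the numerator tends to the strictly positive value $2D(1/r_c(D)-1)$; the strict inequality $r_c(D)<1$ is what prevents an indeterminate $0/0$ at the limit point.

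For the bijection claim, the plan is to invert the defining relation explicitly. Solving $r_c^3+2Dr_c-2D=0$ for $D$ yields
\[
D(r_c)=\frac{r_c^3}{2(1-r_c)},
\]
which is smooth on $(0,1)$ with $D(r_c)\to 0$ as $r_c\to 0^+$ and $D(r_c)\to\infty$ as $r_c\to 1^-$. A direct computation gives
\[
\frac{dD}{dr_c}=\frac{r_c^2(3-2r_c)}{2(1-r_c)^2}>0 \quad \text{on } (0,1),
\]
so $D(\cdot)$ is a continuous strictly increasing surjection from $(0,1)$ onto $(0,\infty)$, and hence a bijection whose inverse is precisely $D\mapsto r_c(D)$. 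I do not anticipate any substantive obstacle here; the only step that requires care is the algebraic factoring that exposes the cubic $f$, after which everything reduces to elementary calculus on a single-variable polynomial.
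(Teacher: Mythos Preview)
Your proof is correct and follows essentially the same route as the paper: reduce the blow-up condition to the vanishing of the cubic $r^3+2Dr-2D$ on $(0,1)$, apply the intermediate value theorem together with strict monotonicity to obtain a unique root, and then invert via $D(r_c)=r_c^3/(2(1-r_c))$ and check its monotonicity and limits. You supply more detail (the explicit factoring of the denominator and the explicit formula for $dD/dr_c$), but the underlying argument is identical.
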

\begin{proof} 
Fix $D>0$ and define $g:(0,1)\mapsto \mathbb{R}$ by $g(r)=r^3+2Dr-2D$. Since $g(0)=-2D$, $g(1)=1$, and $g^{\prime}(r)>0$ it follows from the Intermediate Value Theorem there exists unique $r_c(D)\in (0,1)$ such that $g(r_c(D))=0$. Consequently, it follows from Eq. (\ref{theta_eq}) that
\begin{equation*}
\lim_{r \to r_c(D)^+}\left.{\frac{\partial \theta^*}{\partial r^*}}\right|_{r} = \infty.
 \end{equation*}
   The bijection is proved by noting that the inverse mapping from $r_c(D)$ to $D$ given by $D(r_c)=r_c^3/2(1-r_c)$ satisfies $\lim_{r_c\rightarrow 0^+}D(r_c)=0$, $\lim_{r_c\rightarrow 1^{-}}D(r_c)=\infty$ and is monotone increasing in $r_c$.
 \end{proof}

To extend smooth solutions beyond the point where $\theta^*$ is no longer a function of $r^*$ it follows from Proposition \ref{prop:symmetry} that it is necessary to reflect the solutions across the line $\theta=\theta_f/2$.  Specifically, for $D\in (0,\infty)$ if we define $r_c(D)$ as in Proposition \ref{prop:bijection} then we obtain the following family of solutions expressed in parametric form $\alpha(s)=(r_D^S(s)\cos(\theta_D^S(s)),r_D^S(s)\sin(\theta_D^S(s)))$ with:
\begin{align}
r_D^S(s)&=\begin{cases}
2(r_c(D)-1)s+1, & 0\leq s \leq \frac{1}{2}\\
2(1-r_c(D))(s-1)+1, & \frac{1}{2} < s \leq 1 
\end{cases}, \label{eq:rparamaterization}\\
\theta_D^S(s)&=\begin{cases}
\displaystyle{\pm \int_{1}^{r_D^S(s)}\sqrt{\frac{2(1 - u)D}{u^5 - 2u^2D(1-u)}  }\,du}, & 0\leq s\leq \frac{1}{2}\\
\displaystyle{\mp \int_{r_c(D)}^{r_D^S(s)}\sqrt{\frac{2(1 - u)D}{u^5 - 2u^2D(1-u)}  }\,du}\pm \theta_D^S(1/2), & \frac{1}{2}\leq s\leq 1
\end{cases}, \label{eq:thparamaterization}
\end{align}
where we are using the superscript ``$S$'' to denote that these are strong solutions to the Euler-Lagrange equations. Note, that while the individual functions $r_D^S(s)$ and $\theta_D^S(s)$ are not smooth, the curve $\alpha$ itself is a smooth function from $[0,1]$ into $\mathbb{R}^2$. 

\begin{figure}[htb]
\centering 
\includegraphics[width=.5\textwidth]{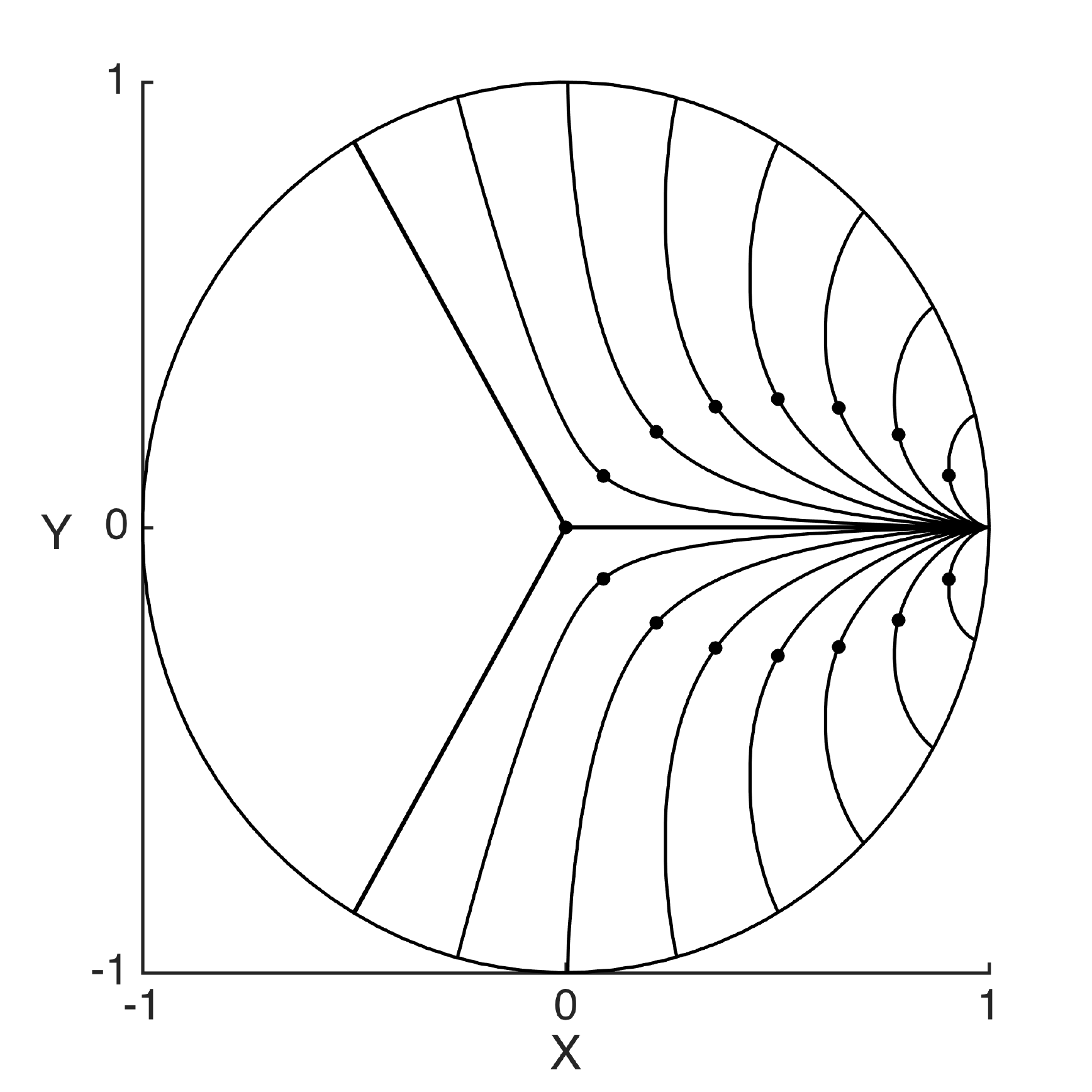}
\caption{Plot of sixteen smooth strong solution curves $\alpha(s)$ defined by Eqs. (\ref{eq:rparamaterization}-\ref{eq:thparamaterization}) with the final angular coordinate  $\theta_D^S(1)$ uniformly spaced from $-2\pi/3$ to $2\pi/3$. The value of $D$ was found by the bisection method, i.e. the shooting method, applied to Eq. (\ref{eq:thparamaterization}). The points indicate the critical radius $r_c(D)$ where $\frac{d\theta_D^S}{dr_D^S} = \pm \infty$ and the curve begins receding away from the origin.}\label{fig:classic_solutions}
\end{figure}

Interestingly, as $D$ ranges over values in $(0, \infty)$ the curves defined by Eqs. (\ref{eq:rparamaterization}) and (\ref{eq:thparamaterization}) do not foliate the unit disk ${x^2 + y^2 \leq 1}$; see Fig. \ref{fig:classic_solutions}. Indeed, if we define the sector $S$ by 
\begin{equation}
S=\{\theta : -\pi \leq \theta < -2\pi /3\} \cup \{\theta: 2\pi /3 < \theta < \pi\}
\end{equation}
it was shown in \cite{Tee99, Gemmer2011} that these curves do not enter $S$. This is made precise by the following proposition whose proof we adapt from \cite{Gemmer2011}.  

\begin{prop} \label{prop:characterization1}
For all $s \in [0, 1]$ and $D \in (0, \infty)$ the curves $\alpha(s)$ with radial and angular components $r_D^S(s)$ and $\theta_D^S(s)$ defined by Eqs. (\ref{eq:rparamaterization}) and (\ref{eq:thparamaterization}) satisfy $\theta_D^S(s) \notin S$.
\end{prop}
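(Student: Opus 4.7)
The plan is to reduce the claim to a sharp bound on the endpoint angle $\theta_D^S(1)$ and then analyze an explicit integral. First I would observe that by Eq. (\ref{eq:thparamaterization}) the function $\theta_D^S$ is monotone on each of $[0,1/2]$ and $[1/2,1]$, while Proposition \ref{prop:symmetry} gives $\theta_D^S(1)=2\,\theta_D^S(1/2)$. Hence the range of $\theta_D^S$ is the closed interval with endpoints $0$ and $\theta_D^S(1)$, so it suffices to establish $|\theta_D^S(1/2)|\le \pi/3$ for every $D\in(0,\infty)$.

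Next I would parametrize by the critical radius $r_c\in(0,1)$ via the bijection from Proposition \ref{prop:bijection} and substitute $D=r_c^{3}/[2(1-r_c)]$. Using that $u=r_c$ is a root of the cubic appearing in the denominator of the integrand of Eq. (\ref{eq:thparamaterization}), I would factor
\[
u^{5}-2u^{2}D(1-u)=u^{2}(u-r_c)\bigl[(1-r_c)u^{2}+(1-r_c)r_c\,u+r_c^{2}\bigr],
\]
and then perform the change of variables $u=r_c v$ to obtain
\[
|\theta_D^S(1/2)|=\int_{1}^{1/r_c}\sqrt{\frac{1-r_c v}{v^{2}(v-1)\bigl[(1-r_c)(v^{2}+v)+1\bigr]}}\,dv.
\]

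The crux is to show this quantity is monotone decreasing in $r_c$ and to compute its limit as $r_c\to 0^+$. A direct calculation gives
\[
\frac{\partial}{\partial r_c}\!\left(\frac{1-r_c v}{(1-r_c)(v^{2}+v)+1}\right)=-\frac{v^{3}}{\bigl[(1-r_c)(v^{2}+v)+1\bigr]^{2}}<0,
\]
so the integrand decreases pointwise in $r_c$ for every fixed $v\ge 1$, and simultaneously the upper endpoint $1/r_c$ shrinks as $r_c$ grows. Monotone convergence then yields
\[
\sup_{r_c\in(0,1)}|\theta_D^S(1/2)|=\lim_{r_c\to 0^+}|\theta_D^S(1/2)|=\int_{1}^{\infty}\frac{dv}{v\sqrt{v^{3}-1}}.
\]
The successive substitutions $w=1/v$ and then $t=w^{3}$ reduce this limiting integral to $\tfrac{1}{3}\,B(1/2,1/2)=\pi/3$. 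Combined with the reduction in the first step, this gives $|\theta_D^S(s)|<2\pi/3$ for every $s\in[0,1]$ and every $D\in(0,\infty)$, so $\theta_D^S(s)\notin S$.

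The main obstacle will be the joint handling of the pointwise monotonicity of the integrand and the shrinking interval of integration, where one must confirm that monotone convergence still delivers the correct supremum despite both effects acting in the same direction; in particular the integrable singularity at $v=1$ should be controlled uniformly in $r_c$. The other ingredients — the cubic factorization, the algebraic cleanup following $u=r_c v$, and the beta-function evaluation of the limiting integral — are routine once the set-up is fixed.
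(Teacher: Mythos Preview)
Your argument is correct and reaches the same conclusion as the paper, but via a noticeably different route. The paper also reduces to bounding the terminal angle $\bar\theta(D)=\theta_D^S(1)$, but establishes monotonicity of $\bar\theta$ in $D$ by appealing to uniqueness of solutions of the Euler--Lagrange ODE (distinct solution curves cannot cross), then makes the change of variables $x=(r_c/u)^{3/2}$, which maps the integral onto the fixed interval $[0,1]$ (with an indicator cutoff), applies dominated convergence with majorant $(1-x^2)^{-1/2}$, and evaluates the limit as an arcsine integral. You instead parametrize by $r_c$, use the cubic factorization and the substitution $u=r_c v$ to land on the varying interval $[1,1/r_c]$, prove monotonicity by differentiating the integrand directly in $r_c$ (your identity $\partial_{r_c}\bigl[(1-r_c v)/((1-r_c)(v^{2}+v)+1)\bigr]=-v^{3}/[\,\cdot\,]^{2}$ is correct and makes this step fully explicit), and then invoke monotone convergence together with a Beta-function evaluation of the limiting integral. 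Your path is more self-contained, since the monotonicity does not lean on an ODE non-crossing argument, while the paper's fixed-interval substitution makes the passage to the limit slightly cleaner. Your stated worry about the singularity at $v=1$ and the moving endpoint is in fact unnecessary: as $r_c\downarrow 0$ the integrand increases pointwise and the domain expands, so extending by zero outside $[1,1/r_c]$ and applying the monotone convergence theorem is already rigorous without any uniform control.
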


\begin{proof}
    For $D > 0$ let $\alpha(s) = (r_D^S(s)\cos(\theta_D^S(s)), r_D^S(s)\sin(\theta_D^S(s)))$ be defined by Eqs. (\ref{eq:rparamaterization}) and (\ref{eq:thparamaterization}) with the ``-'' branch. Differentiating, it follows that
\begin{equation*}
    \frac{d\theta_D^S}{ds} = \frac{d\theta_D^S}{dr_D^S}\frac{dr_D^S}{ds} = 2r_c(D)\sqrt{\frac{2(1-r_D^S)D}{(r_D^S)^5 - 2(r_D^S)^2D(1-r_D^S)}} > 0
\end{equation*}
with equality only at $r_D^S = 1$. Hence, $\frac{d\theta_D^S}{ds}$ is monotone increasing in $s$ with the maximum angular coordinate $\bar{\theta}(D)$ satisfying
\begin{equation*}
    \bar{\theta}(D) = \max_{0 \leq s \leq 1}{\theta^S_D(s)} = 2\int_{r_c(D)}^{1}{\sqrt{\frac{2(1-u)D}{u^5 - 2u^2D(1-u)}}du}.
\end{equation*}
Since $\lim_{D \to \infty}{r_c(D)} = 1$ it follows that $\lim_{D \to \infty}{\bar{\theta}(D)} = 0$. Now, from uniquess of solutions to Eq. (\ref{eq:theta_diff_eq}) we can deduce that $\bar{\theta}(D)$ must be monotone decreasing in $D$ and hence has a limit as $D \to 0$. By making the change of variables $x = \frac{r_c(D)}{u}^{\frac{3}{2}}$, we obtain
\begin{align*}
    \frac{1}{2}\lim_{D \to 0}{\bar{\theta}(D)} &= \lim_{D \to 0}{\frac{2}{3}\int_{r_c(D)}^{1}{\sqrt{\frac{1 - r_c(D)x^{-\frac{2}{3}}}{(1 - r_c(D)) - (1 - r_c(D)x^{-\frac{2}{3}})x^2}}dx}} \\
                                               &= \lim_{D \to 0}{\frac{2}{3}\int_{0}^{1}{\sqrt{\frac{1}{\frac{1 - r_c(D)}{1 - r_c(D)x^{-\frac{2}{3}}} - x^2}}\mathbb{I}\{x > r_c(D)^\frac{3}{2}}\}dx}
\end{align*}
where $\mathbb{I}$ denotes the standard indicator function. Now, observing that the integrand of the above equation forms a sequence of functions bounded by $(1 - x^2)^{-\frac{1}{2}}$ it follows from Lebesgue's Dominated Convergence Theorem that
\begin{equation*}
    \lim_{D \to 0}{\bar{\theta}(D)} = \int_{0}^{1}{\frac{4}{3}\sqrt{\frac{1}{1 - x^2}}dx} = \frac{4}{3}(\arcsin(1) - \arcsin(0)) = \frac{2\pi}{3}.
\end{equation*}
The exact same arguments hold if we consider the ``+'' branch in Eqs. (\ref{eq:rparamaterization}) and (\ref{eq:thparamaterization}) except the limiting angle is $-\frac{2\pi}{3}$. Consequently we can conclude that for all $s \in [0, 1]$ and $D \in (0, \infty)$ that $\theta_D^S(s)$ given by Eq. (\ref{eq:thparamaterization}) satisfies $-\frac{2\pi}{3} \leq \theta_D^S(s) \leq \frac{2\pi}{3}$.
\end{proof}

The following proposition immediately follows from Propositions \ref{prop:bijection} and \ref{prop:characterization1}.
\begin{prop} \label{prop:characterization2}
    For all $\theta_f \in (0, 2\pi/3)$ there exists $D \in (0, \infty)$ such that the solution curve with angular and radial coordinates $(\theta_D^S(s), r_D^S(s))$ as defined by Eqs. (\ref{eq:rparamaterization}-\ref{eq:thparamaterization}), satisfies $\theta_D^S(0) = 0$, and $\theta_D^S(1) = \theta_f$. Moreover, the mapping $\theta_f \mapsto D$ is a bijection.
\end{prop}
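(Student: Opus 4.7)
The plan is to reduce this statement to a direct application of the Intermediate Value Theorem, using only facts already extracted in the proofs of Propositions \ref{prop:bijection} and \ref{prop:characterization1}. First I would observe that the construction in Eqs. (\ref{eq:rparamaterization})--(\ref{eq:thparamaterization}) automatically enforces $\theta_D^S(0)=0$, because $r_D^S(0)=1$ makes the integral in Eq. (\ref{eq:thparamaterization}) vanish at $s=0$. Thus the only nontrivial boundary condition to match is $\theta_D^S(1)=\theta_f$. By Proposition \ref{prop:symmetry} and the piecewise definition in Eq. (\ref{eq:thparamaterization}), one has $\theta_D^S(1)=\pm\bar\theta(D)$, where
\[
\bar\theta(D) = 2\int_{r_c(D)}^{1}\sqrt{\frac{2(1-u)D}{u^5-2u^2D(1-u)}}\,du
\]
is the maximum angular coordinate introduced in the proof of Proposition \ref{prop:characterization1}. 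Restricting attention to $\theta_f\in(0,2\pi/3)$ and the ``$+$'' branch of Eq. (\ref{eq:thparamaterization}), it therefore suffices to show that $D\mapsto\bar\theta(D)$ is a bijection from $(0,\infty)$ onto $(0,2\pi/3)$.

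Next I would collect three properties of $\bar\theta$. The endpoint limits $\lim_{D\to 0^+}\bar\theta(D)=2\pi/3$ and $\lim_{D\to\infty}\bar\theta(D)=0$ are established in the proof of Proposition \ref{prop:characterization1}, via a change of variables and dominated convergence in the first case and via $r_c(D)\to 1$ in the second. Monotonicity of $\bar\theta$ in $D$ is asserted there as a consequence of uniqueness of solutions to Eq. (\ref{eq:theta_diff_eq}), and I would invoke it in the same way; alternatively, one can use the bijection $D\mapsto r_c(D)$ from Proposition \ref{prop:bijection} to reparametrize by $r_c$ and observe that increasing $r_c$ shrinks the domain of integration while the integrand's dependence on $D=r_c^3/[2(1-r_c)]$ is manifestly monotone. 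Finally, continuity of $\bar\theta$ in $D$ follows from the continuity of $r_c(D)$ together with a dominated convergence argument: near the lower endpoint the integrand has an integrable $(u-r_c(D))^{-1/2}$ singularity that can be uniformly dominated on compact $D$-intervals.

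Given continuity, strict monotonicity, and the two limits, the Intermediate Value Theorem together with injectivity shows that $D\mapsto\bar\theta(D)$ is a continuous bijection from $(0,\infty)$ onto $(0,2\pi/3)$, which is precisely the claim. The case $\theta_f\in(-2\pi/3,0)$ is identical after choosing the ``$-$'' branch in Eq. (\ref{eq:thparamaterization}). The main obstacle in this outline is the rigorous verification of monotonicity: the uniqueness argument referenced in Proposition \ref{prop:characterization1} is not spelled out in detail, and the cleanest self-contained route is likely the reparametrization by $r_c$ just described, since it turns $\bar\theta$ into an explicit function of a single variable whose monotone behavior can be read off directly.
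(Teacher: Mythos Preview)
Your proposal is correct and is essentially the same approach as the paper's: the paper does not give a separate argument at all but simply states that the proposition ``immediately follows from Propositions \ref{prop:bijection} and \ref{prop:characterization1},'' and your write-up is precisely a fleshing-out of that implication via the continuity, monotonicity, and endpoint limits of $\bar\theta(D)$ already established there. One minor slip: in the paper's conventions it is the ``$-$'' branch of Eq.~(\ref{eq:thparamaterization}) that yields positive $\theta_D^S$ (see the derivative computation in the proof of Proposition \ref{prop:characterization1}), not the ``$+$'' branch, but this does not affect the substance of your argument.
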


\begin{remark}
    It follows from Propositions \ref{prop:bijection} and \ref{prop:characterization1} that $\theta_f$, $r_c$ and $D$ characterize a unique solution curve of the form defined by Eqs. (\ref{eq:rparamaterization}-\ref{eq:thparamaterization}).
\end{remark}
\subsection{Weak Solutions to the Euler-Lagrange Equations}
One family of weak solutions to Eq. (\ref{eq:tof}) is given by the following parameterization:
\begin{align}
  r^W(s)&=\begin{cases}
        1 - 2s, & 0\leq s \leq \frac{1}{2}\\
        2s - 1, & \frac{1}{2} < s \leq 1 
    \end{cases}, \label{eq:rweak}\\
   \theta^W_{\theta_f}(s)&=\begin{cases}
        0, & 0\leq s\leq \frac{1}{2}\\
        \theta_f, & \frac{1}{2}\leq s\leq 1
    \end{cases}, \label{eq:tweak}
\end{align}
where the superscript ``$W$'' is used to denote that these are weak solutions to the Euler-Lagrange equations. This parameterization indeed satisfies the corner condition given by Eq. (\ref{Eqn:WE1}):

\begin{align*}
&\lim_{s \rightarrow c^{-}}\left(\frac{{\alpha^{*}}^{\prime}(s)}{|{\alpha^{*}}^{\prime}(s)| \sqrt{V(A)-V(\alpha(s))}}\right) - \lim_{s\rightarrow c^{+}}\left(\frac{{\alpha^{*}}^{\prime}(s)}{|{\alpha^{*}}^{\prime}(s)| \sqrt{V(A)-V(\alpha(s))}}\right)\\
&= (1, 0) \lim_{s \to \frac{1}{2}^+}\left(\sqrt{\frac{r^W(s)}{r^W(s) - 1}}\right) + (\cos (\theta_f), \sin(\theta_f)) \lim_{s \to \frac{1}{2}^-}\left(\sqrt{\frac{r^W(s)}{r^W(s) - 1}}\right) \\
&= (0, 0).
\end{align*}
It is important to note that as it is defined $\theta_{\theta_f}^W(s)$ is not weakly first differentiable. Specifically, $\theta_{\theta_f}^W(s)$ is only differentiable in the distributional sense with a derivative given by a delta mass centered at $s=1/2$. However, this is only an artifact of the $r=0$ coordinate singularity for polar coordinates and the curve $\alpha^W(s)$ itself is weakly differentiable. Moreover, for $s<1/2$ this curve is simply the solution curve given by Eqs. (\ref{eq:rparamaterization}-\ref{eq:thparamaterization}) with $D=0$ and the weak solution is constructed by joining appropriately rotated copies of this strong solution at the origin. 

The family of solutions to the Euler-Lagrange equations defined by Eqs. (\ref{eq:rweak}-\ref{eq:tweak})  completely foliate the unit disk; see Fig. \ref{fig:complete_foliation}(a). Hence these solution curves are natural candidates for time minimizers that enter the sector $S$. 
In Fig. \ref{fig:complete_foliation}(b) we plot the unit disk foliated by a combination of strong and weak solutions to the Euler-Lagrange equations. More specifically, for a given $\theta_f$ we use Eqs. (\ref{eq:rparamaterization}-\ref{eq:thparamaterization}) or Eqs. (\ref{eq:rweak}-\ref{eq:tweak}) depending on whether $| \theta_f | > 2\pi /3$. The contour beneath the curves in corresponds to the time of flight computed along the solution curves and confirms our intuition that the classic solutions have shorter time of flight outside of $S$. Notice that the contours in Fig. \ref{fig:complete_foliation}(b) are smooth and intersect the strong and weak solution curves orthogonally as expected from Eq. (\ref{Eq:Ray1}). Moreover, the value function $\mathcal{V}$ defined in Section 2 is a solution to the eikonal equation defined by Eq. (\ref{Eq:Eikonal}).

\begin{figure}[ht!]
\centering
\begin{subfigure}[b]{0.45\textwidth}
\centering
\includegraphics[height=.9\textwidth]{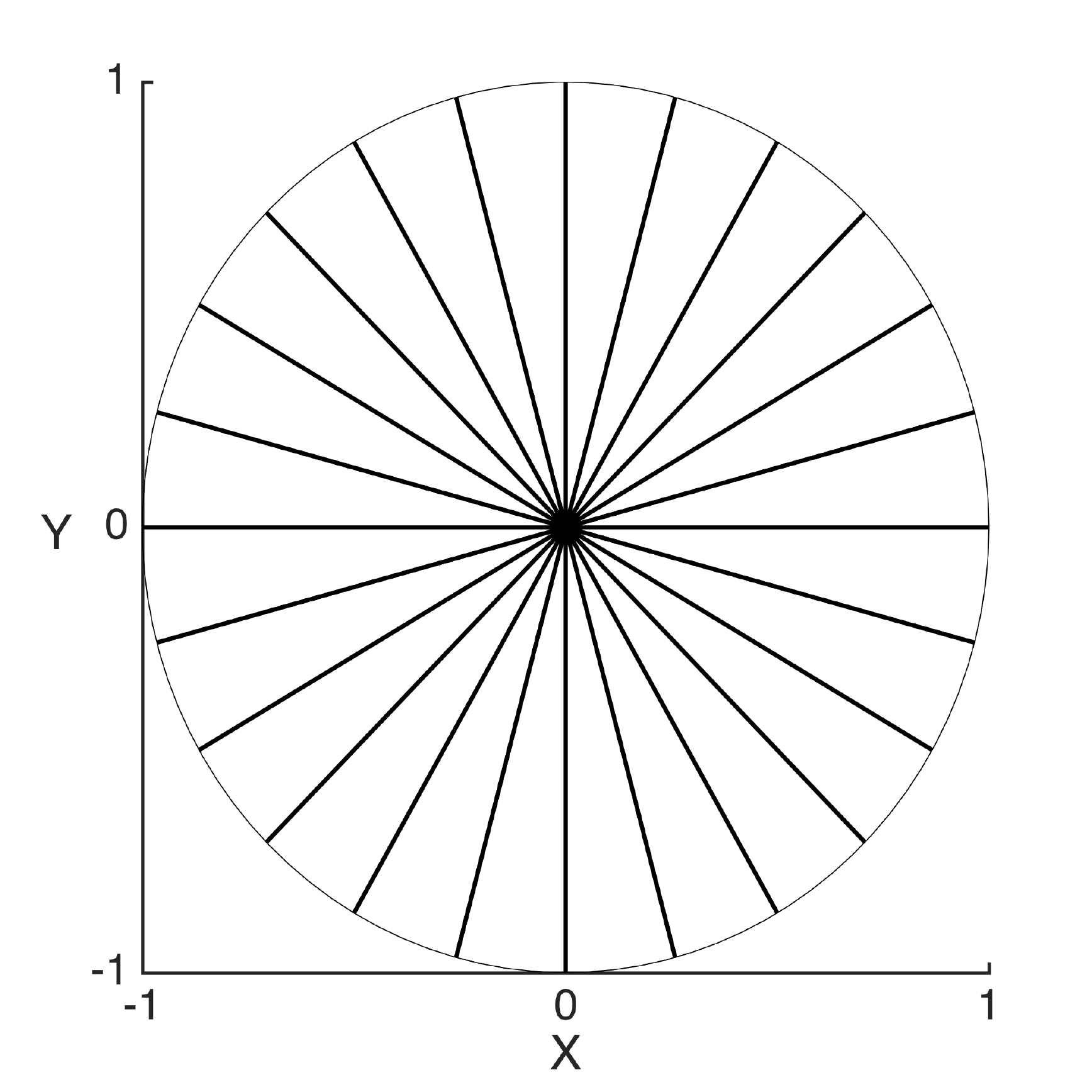}
\caption{}
\end{subfigure}
~
\begin{subfigure}[b]{0.45\textwidth}
\centering
\includegraphics[height=.9\textwidth]{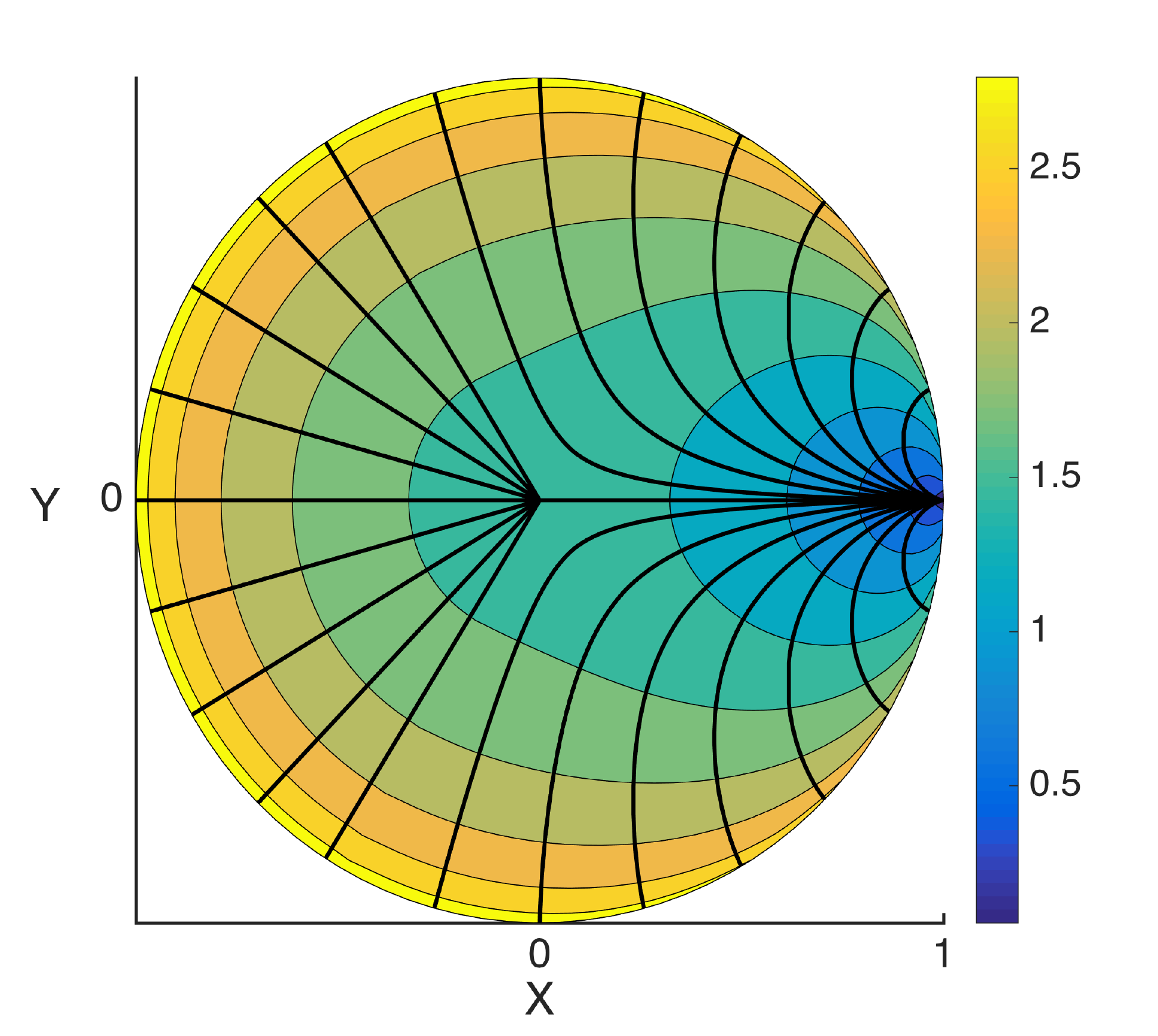}
\caption{}
\end{subfigure} 
\caption{(a) Plot of weak solution curves defined by Eqs. (\ref{eq:rweak}-\ref{eq:tweak}) with final angular coordinate $\theta_f$ spaced evenly from $-\pi$ to $\pi$. (b) Foliation of the unit disk by weak solution curves in $S$ and classic solutions outside of $S$. Beneath the solution curves is a contour plot of the time of flight calculated along the solution curves. The contours rorrespond to level sets of the value $\mathcal{V}$ satisfying the eikonal equation defined by Eq. (\ref{Eq:Eikonal}).} \label{fig:complete_foliation}
\end{figure}

\section{Constrained Inverse-Square Brachistochrone Problem}
\subsection{Variational Inequality}
In the previous section we solved the inverse-square brachistochrone problem using a combination of weak and strong extremizers. However, the solutions are impractical in that as a consequence of the singular gravitational field a particle following along an extremizer will experience infinite acceleration at the origin. To alleviate this problem we now consider a modified  version of the inverse-square brachistochrone problem that restricts the radial coordinate to remain bounded away from the origin. Specifically, for $\epsilon>0$ we define the annulus $\mathcal{O}_\epsilon= \{(x, y) \in \mathbb{R}^2 : \sqrt{x^2 + y^2} \geq \epsilon\}$ and consider the problem of minimizing $T$ over the  admissible set $\mathcal{A}_{\epsilon}\subset \mathcal{A}_0$ defined by
\begin{equation}
\mathcal{A}_{\epsilon} = \left\{(r(s),\theta(s)) \in \mathcal{A}_0 : \left(r(s)\cos(\theta(s),r(s)\sin(\theta(s))\right) \in \mathcal{O}_\epsilon \text{ for } s \in [0, 1]\right\}.
\end{equation}
This formulation of the problem is equivalent to an ``obstacle problem'' with the  obstacle being the circle of radius $\epsilon$ centered at the origin. 

To derive necessary conditions satisfied by minimizers of $T$ over $\mathcal{A}_{\epsilon}$ we follow Ref. \cite{evans1998partial} Chapter 8, Section 4 and derive a variational inequality that plays the role of the Euler-Lagrange equations. First, suppose $\alpha^*(s)\in \mathcal{A}_{\epsilon}$ is the global minimizer of $T$ over $\mathcal{A}_{\epsilon}$ with radial and angular components $r^*(s)$ and $\theta^*(s)$ respectively. Letting  $\beta(s)\in \mathcal{A}_{\epsilon}$ with radial and angular components $q(s)$ and $\theta^*(s)$ respectively it follows from convexity of $\mathcal{A}_{\epsilon}$ that for all $\lambda\in [0,1]$ the curve $\gamma(s)$ with radial component $r^*(s)+\lambda(q(s)-r^*(s))$ and angular component $\theta^*(s)$ also satisfies $\gamma(s)\in \mathcal{A}_{\epsilon}$. Consequently
\begin{equation}
T\left[r^*(s)+\lambda(q(s)-r^*(s)), \theta^*(s)\right]-T\left[r^*(s),\theta^*(s)\right]\geq 0
\end{equation}
and thus taking the limit $\lambda \to 0$ we obtain the following necessary condition satisfied by a minimizer:
\begin{equation} \label{eq:ObstacleEL1}
    \int_{0}^{1}\left((q(s) - r^*(s))\left.\frac{\partial{L}}{\partial{r}}\right|_{r^*(s), \theta^*(s)} + (q^{\prime}(s) - {r^*}^{\prime}(s))\left.\frac{\partial{L}}{\partial{r^\prime}}\right|_{r^*(s), \theta^*(s)}\right)ds \geq 0.
\end{equation}
Since we can perturb $\theta^*(s)$ by any smooth function $\xi$ compactly supported on $[0,1]$ we again obtain the following weak Euler-Lagrange equation:
\begin{equation}\label{eq:ObstacleEL2}
    \int_{0}^{1} \xi^\prime(s)\left.\frac{\partial{L}}{\partial{\theta^\prime}}\right|_{r^*(s), \theta^*(s)}ds =0.
\end{equation}

We now illustrate how Eqs. (\ref{eq:ObstacleEL1}-\ref{eq:ObstacleEL2}) can be used to derive further necessary conditions satisfied by a minimizer $(r^*(s),\theta^*(s))\in \mathcal{A}_{\epsilon}$. Suppose $(r^*(s), \theta^*(s)) \in \mathcal{A}_\epsilon$ minimizes $T$ over $\mathcal{A}_\epsilon$ and define the following sets
\begin{align}
    U &= (r^*)^{-1}\{\epsilon\}, \label{eq:U-Set}\\
    U^c &= [0,1]\setminus U. \label{eq:V-Set}
\end{align}
Since $r^*(s)$ is continuous it follows that $U$ is a closed subset of $[0,1]$. On $U$ it follows that Eq. (\ref{eq:ObstacleEL1}) is automatically satisfied since
\begin{equation*}
\begin{aligned}
   &\int_U\left((q(s) - r^*(s))\left.\frac{\partial{L}}{\partial{r}}\right|_{r^*(s), \theta^*(s)} + (q^{\prime}(s) - {r^*}^{\prime}(s))\left.\frac{\partial{L}}{\partial{r^\prime}}\right|_{r^*(s), \theta^*(s)}\right)ds \\
   &\hspace{70mm}= \int_U (q(s)-\epsilon) \frac{|{\theta^*} ^{\prime}(s)|}{1-\epsilon} \frac{3-2\epsilon}{2}\,ds \geq 0.
\end{aligned}
\end{equation*}
On $U^c$ consider the perturbation $q(s)=\tau v(s)+r^*(s)$, where $v$ is any smooth function compactly supported on $V$ and $\tau\in \mathbb{R}$ is small enough in magnitude that $q(s)\in \mathcal{A}_\epsilon$. Substituting into Eq. (\ref{eq:ObstacleEL1}) yields 
\begin{equation} \label{eq:RecoveringEL}
    \tau\int_{U^c}{\left(v(s)\left.\frac{\partial{L}}{\partial{r}}\right|_{r^*(s), \theta^*(s)} + v'(s)\left.\frac{\partial{L}}{\partial{r'}}\right|_{r^*(s), \theta^*(s)}ds\right)} \geq 0.
\end{equation}
Since $\tau$ is of arbitrary sign the above inequality is actually an equality. That is, for $s\in U^c$ we recover the weak Euler-Lagrange equations for $r^*(s)$. 

\begin{remark}
Taken together the above necessary conditions imply that potential minimizers of $T$ over $\mathcal{A}_{\epsilon}$ consist of the family of curves satisfying the Euler-Lagrange equations away from the constraint. That is, potential minimizers consist of piecewise smooth curves satisfying Eqs. (\ref{eq:rparamaterization}-\ref{eq:thparamaterization}) joined with circular arcs of radius $\epsilon$.
\end{remark}

\subsection{Piecewise Smooth Minimum}
As in the case with no constraint, i.e. $\epsilon=0$, we now foliate $\mathcal{O}_{\epsilon}$ by curves that minimize the time of flight. By symmetry we only foliate the upper half-annulus  $\mathcal{O}_\epsilon^+ = \{(x, y) \in \mathcal{O}_\epsilon : y \geq 0\}$. To construct the foliation we examine the behavior of potential minimizers with terminal coordinates $(r_t,\theta_f)$ satisfying $(r_f,\theta_f) \in \partial \mathcal{O}_{\epsilon}^+$ (the boundary of $\mathcal{O}_{\epsilon}^+$) which can be naturally divided into four regions:
\begin{align*}
R_1 &= \{(r, \theta) \in \partial\mathcal{O}_\epsilon^+ : \theta=0\},\\
R_2 &= \{(r, \theta) \in \partial\mathcal{O}_\epsilon^+ : r = 1\}, \\
R_3 &= \{(r, \theta) \in \partial\mathcal{O}_\epsilon^+ : \theta = \pi \},\\
R_4 &= \{(r, \theta) \in \partial\mathcal{O}_\epsilon ^+: r = \epsilon \}.
\end{align*} 
Each of these regions is considered as separate cases below.

\subsubsection{Minimizers terminating on $R_1$}
It follows from Eqs. (\ref{eq:rparamaterization}-\ref{eq:thparamaterization}) that if $D=0$ the strong solution to the Euler-Lagrange equation is a straight line connecting $(1,0)$ to the origin. In particular this implies that if $(r_f,\theta_f)\in R_1$ then straight lines are the natural candidate minimizers.

\subsection{Minimizers terminating on $R_2$}
Suppose $(r_f,\theta_f)\in R_2$. For $\theta_f$ sufficiently small we expect the minimizers to consist of the smooth strong solution curves defined by Eqs. (\ref{eq:rparamaterization}-\ref{eq:thparamaterization}). However, if $\theta_f>\pi/3$  the strong solutions to the Euler-Lagrange equations will necessarily intersect the obstacle. Note that from convexity of the strong solutions there exists a unique critical angle $\theta_c^{\epsilon}\in (0,\pi/3)$ in which the strong solutions intersect the obstacle tangentially. Specifically, $\theta_c^{\epsilon}$ is defined by
\begin{equation}
\left(r^S_{D(\epsilon)}(1/2),\theta^S_{D(\epsilon)}(1/2)\right)=(\epsilon,\theta_c^\epsilon) \text{  and } \left.\frac{dr_{D(\epsilon)}^S}{ds}\right|_{1/2}=0.
\end{equation}
The critical angle $\theta_c^{\epsilon}$ serves as a boundary in the sense that if the final angular coordinate $\theta_f$ satisfies $\theta_f/2 >\theta_c^{\epsilon}$ then it is necessary to consider piecewise defined curves as candidate minimizers. This is made precise by the following proposition.

\begin{prop} \label{prop:minimizeAwayFromConstraint}
Suppose that the smooth solution curves given by Eqs. (\ref{eq:rparamaterization}-\ref{eq:thparamaterization}) are global minimizers of $T$ over $\mathcal{A}_0$. For $\epsilon>0$  if $\theta_f/2<\theta_c^{\epsilon}$ then there exists a $D \geq D(\epsilon) > 0$ such that $(r_D^S(s),\theta_D^S(s))\in \mathcal{A}_{\epsilon}$ minimizes  $T$ over curves in  $\mathcal{A}_{\epsilon}$ terminating at the angular coordinate $\theta_f$. 
\end{prop}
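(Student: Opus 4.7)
The plan is to observe that the hypothesis $\theta_f/2 < \theta_c^\epsilon$ forces the smooth strong solution matching the prescribed boundary data to already lie inside the constrained admissible set $\mathcal{A}_\epsilon$, so that the assumed global optimality on $\mathcal{A}_0$ transfers to $\mathcal{A}_\epsilon$ via a one-line containment argument. No new variational machinery is needed; the work is entirely in chaining together the monotone bijections from Propositions \ref{prop:bijection} and \ref{prop:characterization2}.

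First I would apply Proposition \ref{prop:characterization2} to extract the unique $D > 0$ with $\theta_D^S(1) = \theta_f$, and then compare this $D$ to the threshold $D(\epsilon)$ defined in the paragraph preceding the proposition by $r_{D(\epsilon)}^S(1/2) = \epsilon$ (equivalently $r_c(D(\epsilon)) = \epsilon$). By the definition of $\theta_c^\epsilon$ together with the reflection/symmetry construction in Eqs. (\ref{eq:rparamaterization})--(\ref{eq:thparamaterization}), the strong solution with parameter $D(\epsilon)$ has terminal angular coordinate $2\theta_c^\epsilon$. The monotonicity of $D \mapsto \bar\theta(D)$ established inside the proof of Proposition \ref{prop:characterization1} (decreasing on $(0,\infty)$) then converts the hypothesis $\theta_f < 2\theta_c^\epsilon$ into $D > D(\epsilon)$, and the monotonicity of $D \mapsto r_c(D)$ from Proposition \ref{prop:bijection} upgrades this to $r_c(D) > \epsilon$.

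With this radial bound in hand, I would read off from the parameterization (\ref{eq:rparamaterization}) that $r_D^S(s) \geq r_c(D) > \epsilon$ for every $s \in [0,1]$, so $(r_D^S, \theta_D^S) \in \mathcal{A}_\epsilon$. The conclusion then follows by a containment argument: every admissible competitor in $\mathcal{A}_\epsilon$ terminating at angular coordinate $\theta_f$ is also a competitor in $\mathcal{A}_0$, so the hypothesized global minimality of $(r_D^S, \theta_D^S)$ over $\mathcal{A}_0$ at that endpoint immediately implies minimality over the smaller class $\mathcal{A}_\epsilon$.

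The main point that needs care, rather than a genuine technical obstacle, is the bookkeeping of monotonicity directions: $\theta_f$ decreases as $D$ grows while $r_c$ increases with $D$, so the key conceptual step is verifying that the tangency condition defining $\theta_c^\epsilon$ really does correspond to the strong solution whose turnaround radius equals $\epsilon$, i.e.\ to the parameter $D(\epsilon)$. Once that identification is in place, the rest of the proof is purely set-theoretic.
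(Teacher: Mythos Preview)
Your proposal is correct and follows essentially the same approach as the paper's own proof: both use the monotonicity relations from Propositions~\ref{prop:bijection} and~\ref{prop:characterization2} to conclude $D \geq D(\epsilon)$ and hence $r_c(D) \geq \epsilon$, then verify the strong solution stays in $\mathcal{A}_\epsilon$, and finish with the containment $\mathcal{A}_\epsilon \subset \mathcal{A}_0$. The only cosmetic difference is that the paper invokes convexity of $r_D^S(s)$ in $s$ to bound it below by $\epsilon$, whereas you read off $r_D^S(s) \geq r_c(D)$ directly from the piecewise-linear parameterization~(\ref{eq:rparamaterization}); these are equivalent observations.
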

\begin{proof}
Let $\theta_f \in (0, 2\pi/3)$ satisfy $\theta_f/2 < \theta_c^\epsilon$ and $(r_D^S(s), \theta_D^S(s))$ parameterize the smooth solutions given by Eq. (\ref{eq:rparamaterization}-\ref{eq:thparamaterization}) terminating at $(\cos(\theta_f), \sin(\theta_f))$. From Propositions \ref{prop:characterization1} and \ref{prop:characterization2} it follows that $\theta_f$ and $r_c$ are monotonically decreasing and increasing in $D$ respectively, and thus $r_c(D(\theta_f)) \geq \epsilon$. Furthermore it follows that since $r_D^S(s)$ is convex in $s$ that $r_D^S(s)\geq \epsilon$ and thus $(r_D^S(s), \theta_D^S(s)) \in \mathcal{A}_\epsilon$. Finally, since $\mathcal{A}_\epsilon \subset \mathcal{A}_0$ and $(r_D^S(s), \theta_D^S(s))$ is assumed to minimize $T$ over curves in $\mathcal{A}_0$ which terminate at $(\cos(\theta_f), \sin(\theta_f))$, it follows that $(r_D^S(s), \theta_D^S(s))$ also minimizes $T$ over curves in $\mathcal{A}_\epsilon$ which terminate at $(\cos(\theta_f), \sin(\theta_f))$. 
\end{proof}

For a strong solution given by Eqs. (\ref{eq:rparamaterization}-\ref{eq:thparamaterization}) satisfying $\theta_f/2  >\theta_c^\epsilon$, let $s_D^\epsilon= \min\{(r_D^S)^{-1}\{\epsilon\}\}$, i.e. the first point of intersection with the obstacle. The natural generalizations of Propositions \ref{prop:monotonicity} and \ref{prop:symmetry} can be shown to hold for $\mathcal{A}_\epsilon$ and consequently we know, without loss of generality, that minimizers consist of curves symmetric about the angle $\theta_f/2$ that are smooth solutions given by Eqs. (\ref{eq:rparamaterization}-\ref{eq:thparamaterization}) away from the constraint, ride along it for a finite amount of time, and then rejoin a rotated and reflected version of the latter half of the same smooth solution, see Fig. \ref{fig:family_samples}. This family of minimizers is given below:

\begin{equation} \label{eq:obstacleFamily}
    \mathcal{F}_{D,\theta_f}^{\epsilon}(s)= \begin{cases}
        (r_{D}^S(h(s))\cos(h(s))), r_{D}^S(h(s)))\sin(\theta_{D}^S(h(s))))) & s \in [0, 1/3)\\
        (\epsilon\cos(t(s)), \epsilon\sin(t(s))) & s \in [1/3, 2/3]\\
        (r_{D}^S(s)\cos(l(s)), r_{D}^S(s)\sin(l(s))) & s \in (2/3, 1]
    \end{cases},
\end{equation}
where $r_D^S(s)$, $\theta_D^S(s)$ are given by Eqs. (\ref{eq:rparamaterization}-\ref{eq:thparamaterization}) with $\theta_f(D)$ satisfying $\theta_f(D)/2 \geq \theta_c^{\epsilon}$, $h(s) = s/(3s_D^\epsilon)$, $j(s) = 3s_D^\epsilon s + 1 - 3s_D^\epsilon$, $l(s) = \theta_D^S(j_{D, \epsilon}(s)) + \theta_f - \theta_D^S(1)$ and $t(s) = 3(\theta_f - 2\theta^S(s_D^\epsilon))s + \theta_f - \theta^S(s_D^\epsilon)$. The following proposition characterizes the minimum of $T$ over the family of curves given by Eq. (\ref{eq:obstacleFamily}), namely they consist of  the curves defined by Eq. (\ref{eq:obstacleFamily}) that meet the constraint at a tangent. 

\begin{figure}[htb]\label{fig:FamilySamples}
    \centering
    \includegraphics[width=1.0\textwidth]{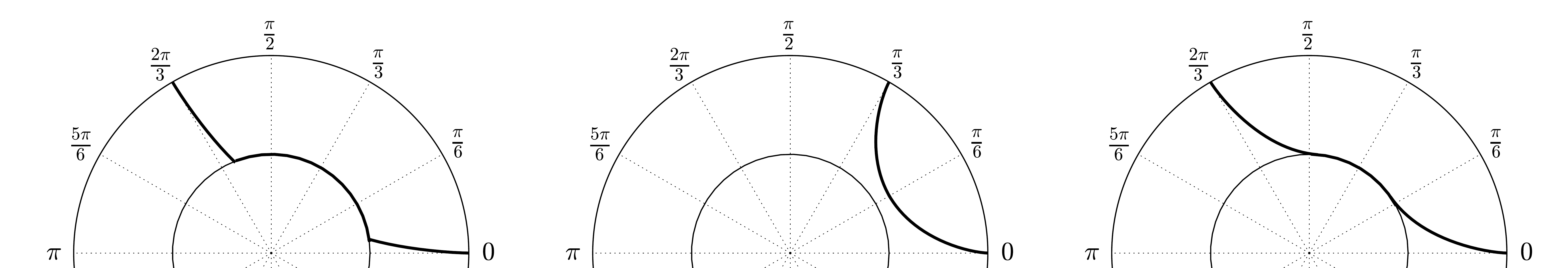}
    \caption{Plots of curves $\mathcal{F}_{D, \theta_f}^\epsilon(s)$ defined by Eq. (\ref{eq:obstacleFamily}) with $\epsilon = 0.5$. (a) A curve $\mathcal{F}_{D, \theta_f}^\epsilon(s)$ with $\theta_f = 2\pi/3$, $D = 0.0204$ and $\theta_c = \pi/4$ which reaches the obstacle and rides along it. (b) A curve $\mathcal{F}_{D, \theta_f}^\epsilon(s)$ with $\theta_f = \pi/3$, $D = 0.2300$ and $\theta_c = \pi/6$ which does not reach the obstacle ($s_\epsilon = 0.5$). (c) A curve $\mathcal{F}_{D, \theta_f}^\epsilon(s)$ with $\theta_f = 2\pi/3$, $D = 0.1250$ and $\theta_c = 0.5981$ which approaches the obstacle at a tangent and rides along it.} \label{fig:family_samples}
\end{figure}

\begin{prop} \label{prop:minimizeAlongConstraint}
	Suppose that the smooth solution curves given by Eqs. (\ref{eq:rparamaterization}-\ref{eq:thparamaterization}) are global minimizers in $\mathcal{A}_0$. For $\epsilon>0$  if $\theta_f/2 \geq \theta_c^{\epsilon}$ then the unique minimizer of $T$ over the family of curves defined by Eq. (\ref{eq:obstacleFamily}) intersects the constraint tangentially.  
\end{prop}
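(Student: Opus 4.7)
The plan is to parameterize the family $\{\mathcal{F}_{D,\theta_f}^{\epsilon}\}$ by the single variable $D\in(0,D(\epsilon)]$ (with $\theta_f$ fixed and $\theta_f/2\geq \theta_c^{\epsilon}$), reduce $T[\mathcal{F}_{D,\theta_f}^{\epsilon}]$ to a scalar function $T(D)$, and prove that $T$ is strictly decreasing on $(0,D(\epsilon))$ with $T'(D(\epsilon))=0$. Since by Proposition \ref{prop:bijection} the choice $D=D(\epsilon)$ is equivalent to $r_{c}(D)=\epsilon$, i.e.\ the innermost point of the underlying strong solution lying exactly on the obstacle, this identifies the unique minimizer as the curve meeting the constraint tangentially.

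First I would exploit the reflection symmetry (the natural analog of Proposition \ref{prop:symmetry} on $\mathcal{A}_{\epsilon}$) to decompose
\[
T(D)=2\,T_{\mathrm{str}}(D)+T_{\mathrm{arc}}(D),
\]
where $T_{\mathrm{str}}(D)$ is the time along the strong-solution piece from $A=(1,0)$ to the first intersection $(\epsilon,\theta^{*}(D))$ with the obstacle circle, $\theta^{*}(D):=\theta_{D}^{S}(s_{D}^{\epsilon})$, and $T_{\mathrm{arc}}(D)$ is the time along the circular arc at $r=\epsilon$. On the arc $r^{\prime}=0$, so $L_{2}$ reduces to $\epsilon^{3/2}|\theta^{\prime}|/\sqrt{1-\epsilon}$, yielding the explicit formula $T_{\mathrm{arc}}(D)=\epsilon^{3/2}(\theta_{f}-2\theta^{*}(D))/\sqrt{1-\epsilon}$.

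Next I would compute $dT_{\mathrm{str}}/dD$ via the first-variation (envelope) formula for arc-length-type functionals. Since each $(r_{D}^{S},\theta_{D}^{S})$ is a critical point of $T$ with initial point fixed at $A$, the interior variation vanishes and the derivative reduces to the boundary term at the endpoint $(\epsilon,\theta^{*}(D))$, namely $dT_{\mathrm{str}}/dD=p_{r}\,dr^{*}/dD+p_{\theta}\,d\theta^{*}/dD$, with $p_{r}=\partial L_{2}/\partial r^{\prime}$ and $p_{\theta}=\partial L_{2}/\partial\theta^{\prime}$ evaluated at the endpoint. The radial endpoint is fixed ($r^{*}\equiv\epsilon$), so the first term drops out. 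The conserved angular momentum $p_{\theta}$ underlying Eq.~(\ref{eq:theta_diff_eq}) can be computed directly using Eq.~(\ref{eq:thparamaterization}); a short algebraic manipulation gives $p_{\theta}=\sqrt{2D}$ along the strong solution. Combining,
\[
\frac{dT}{dD}=2\,\frac{d\theta^{*}}{dD}\left(\sqrt{2D}-\frac{\epsilon^{3/2}}{\sqrt{1-\epsilon}}\right).
\]
The integral representation $\theta^{*}(D)=\int_{\epsilon}^{1}\sqrt{2(1-u)D/(u^{5}-2u^{2}D(1-u))}\,du$ has integrand of the form $\sqrt{aD/(b-cD)}$ with $a,b,c>0$, hence monotone increasing in $D$, so $d\theta^{*}/dD>0$. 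Using the explicit inverse from Proposition~\ref{prop:bijection}, $D(\epsilon)=\epsilon^{3}/(2(1-\epsilon))$, one verifies $\sqrt{2D(\epsilon)}=\epsilon^{3/2}/\sqrt{1-\epsilon}$, so the bracketed factor is strictly negative on $(0,D(\epsilon))$ and vanishes exactly at $D(\epsilon)$. Thus $T$ is strictly decreasing on $(0,D(\epsilon)]$, attaining its unique minimum at $D=D(\epsilon)$, which by Proposition \ref{prop:bijection} is precisely the tangential configuration.

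The main obstacle is rigorously justifying the envelope computation at the degenerate endpoint $D=D(\epsilon)$, where $dr_{D}^{S}/ds$ vanishes at $s=s_{D}^{\epsilon}=1/2$, so the implicit function theorem does not immediately deliver smooth dependence of $s_{D}^{\epsilon}$ (and hence $\theta^{*}$) on $D$. I would handle this by first establishing the variational identity on the open interval $(0,D(\epsilon))$, where the intersection with the obstacle is transverse and $s_{D}^{\epsilon}$ depends smoothly on $D$ by the implicit function theorem applied to $r_{D}^{S}(s)-\epsilon=0$, and then extending to the endpoint by continuity of $T(D)$, which follows directly from the integral representations of $T_{\mathrm{str}}$ and $T_{\mathrm{arc}}$ together with Lebesgue's Dominated Convergence Theorem.
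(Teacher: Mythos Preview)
Your argument is correct, and it takes a genuinely different route from the paper's. The paper proves the result by a direct comparison: given any competitor $\mathcal{F}_{D,\theta_f}^{\epsilon}$ with $D<D(\epsilon)$, it observes that both this curve and the tangential curve $\mathcal{F}_{D(\epsilon),\theta_f}^{\epsilon}$ pass through the point $(\epsilon,\theta_c^{\epsilon})$; the tangential curve reaches that point along a sub-arc of a strong solution, while the competitor reaches it by a strong-solution piece followed by a circular arc. The standing hypothesis that strong solutions are global minimizers in $\mathcal{A}_0$ then immediately gives the strict inequality on the first leg, and symmetry plus the common arc handle the rest.

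Your approach instead computes $T(D)$ analytically, using the envelope formula and the conserved momentum $p_{\theta}=\sqrt{2D}$ to obtain the explicit derivative
\[
\frac{dT}{dD}=2\,\frac{d\theta^{*}}{dD}\left(\sqrt{2D}-\frac{\epsilon^{3/2}}{\sqrt{1-\epsilon}}\right),
\]
whose sign is then read off from monotonicity of $\theta^{*}$ in $D$ and the identity $\sqrt{2D(\epsilon)}=\epsilon^{3/2}/\sqrt{1-\epsilon}$. This is more quantitative: it yields not just the minimizer but the stationarity condition $T'(D(\epsilon))=0$, which is precisely the smooth-contact (Weierstrass--Erdmann) condition at the obstacle. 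Notably, your proof never invokes the hypothesis that strong solutions are global minimizers in $\mathcal{A}_0$; it uses only that they are extremals, so your statement is strictly stronger than the paper's. The trade-off is that the paper's comparison argument is a two-line consequence of its running assumption, while yours requires the first-variation machinery and some care at the degenerate endpoint $D=D(\epsilon)$, which you handle correctly by continuity.
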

\begin{proof}
Let $\theta_f \in (0, \pi)$ satisfy $\theta_f/2 \geq \theta_c^\epsilon$. Let $\mathcal{F}_{D(\epsilon), \theta_f}^\epsilon(s)$ be the unique curve which intersects the constraint at a tangent and terminates at angular coordinate $\theta_f$. Let $\mathcal{F}_{D, \theta_f}^\epsilon(s)$ be another curve with $D < D(\epsilon)$ that terminates at angular coordinate $\theta_f$. From Propositions \ref{prop:characterization1} and \ref{prop:characterization2} it follows that $\theta_f$ and $r_c$ are monotonically decreasing and increasing in $D$ respectively, hence $r_c(D) \leq \epsilon$ and $\theta_c(D) \geq \theta_c^\epsilon$. Moreover, it follows from the monotonicity of $\theta_D^S(s)$ in $s$ that $\mathcal{F}_{D, \theta_f}^\epsilon(s)$ intersects the constraint at some angular coordinate $\theta_0 < \theta_c^\epsilon$ and intersects $\mathcal{F}_{D(\epsilon), \theta_f}^\epsilon(s)$ at angular coordinate $\theta_c^\epsilon$. Since $\mathcal{F}_{D(\epsilon), \theta_f}^\epsilon(s)$ is of the form $(r_{D(\epsilon)}^S(s), \theta_{D(\epsilon)}^S(s))$ for  $s < s_{D(\epsilon)}^\epsilon$, it follows from our assumption that smooth solutions given by Eqs. (\ref{eq:rparamaterization}-\ref{eq:thparamaterization}) minimize $T$ that $\mathcal{F}_{D(\epsilon), \theta_f}^\epsilon(s)$ minimizes the time-of-flight to angular coordinate $\theta_c^\epsilon$. Moreover, both curves have the same time of flight along the constraint from angular coordinates $\theta_c^\epsilon$ to $\theta_f - \theta_c^\epsilon$, and consequently $T[\mathcal{F}_{D(\epsilon), \theta_f}^\epsilon(s)] < T[\mathcal{F}_{D, \theta_f}^\epsilon(s)]$. 

\end{proof}



\subsubsection{Minimizers Terminating on $R_3$}
Let $(r, \pi) \in R_3$. We know from our prior analysis in $R_2$ that all minimizers must ride along the obstacle until at least angular coordinate $\pi - \theta_c^\epsilon$. Hence, to minimize over curves terminating at $(r, \pi)$, we need only minimize $T$ over curves from $(\epsilon, \pi - \theta_c^\epsilon)$ to $(r, \pi)$. Note that from the convexity of strong solutions given by Eqs. (\ref{eq:rparamaterization}-\ref{eq:thparamaterization}), there exists a unique angle $\phi_c^\epsilon \geq \pi - \theta_c^\epsilon$ such that the smooth solution comes off the obstacle tangentially at $\phi_c^\epsilon$ and intersects $(r, \pi)$. It can be further shown that this curve minimizes the time of flight $T$ between coordinates $(\epsilon, \pi - \theta_c^\epsilon)$ and $(r, \pi)$. This is made precise by the following proposition.

\begin{prop} \label{prop:R4minimizer}
Suppose that the smooth solution curves given by Eqs. (\ref{eq:rparamaterization}-\ref{eq:thparamaterization}) are global minimmizers in $\mathcal{A}_0$. For $\epsilon > 0$ and $(r, \pi) \in R_3$, a minimizer of $T$ over $\mathcal{A}_{\epsilon}$ that terminates at $(r,\pi)$ leaves the obstacle at a tangent. 
\end{prop}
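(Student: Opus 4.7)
My plan is to mirror the comparison strategy used in the proof of Proposition 4.2, adapted to the asymmetric endpoint configuration of $R_3$. First, invoking the reduction stated in the paragraph preceding the proposition together with the Remark following Eq. (4.6), I would argue that any minimizer from $A=(1,0)$ to $(r,\pi)$ must approach the obstacle tangentially at $(\epsilon,\theta_c^\epsilon)$ and ride along it at least until the angular coordinate $\pi-\theta_c^\epsilon$. Beyond this point any candidate consists of a circular arc along $r=\epsilon$ up to some leave angle $\phi$ followed by a strong solution from $(\epsilon,\phi)$ to $(r,\pi)$. This reduces the problem to a one-parameter minimization over $\phi$, and I would denote the resulting candidate by $\mathcal{G}_\phi\in\mathcal{A}_\epsilon$.

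Second, I would characterize the admissible range of $\phi$. Using Propositions 3.3 and 3.4, for each admissible $\phi$ there exists a unique strong solution whose outgoing branch (the portion from $s=1/2$ to $s=1$, on which $r$ is monotone increasing) connects $(\epsilon,\phi)$ to $(r,\pi)$; this branch lies in $\mathcal{A}_\epsilon$ precisely when the starting point $(\epsilon,\phi)$ is at or past the minimum $r=r_c(D)$. The threshold case $r_c(D)=\epsilon$ defines $\phi_c^\epsilon$ as the unique tangential leave angle terminating at $(r,\pi)$. For $\phi<\phi_c^\epsilon$ the connecting strong solution would have $r_c(D)<\epsilon$ and therefore exit $\mathcal{A}_\epsilon$ before reaching the terminal angular coordinate $\pi$; hence the admissible range is $\phi\in[\phi_c^\epsilon,\pi]$, with $\phi=\pi$ the degenerate case $D=0$ given by a radial segment.

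Third, I would establish optimality of $\phi=\phi_c^\epsilon$ by a direct comparison. For any admissible $\phi>\phi_c^\epsilon$, the curves $\mathcal{G}_{\phi_c^\epsilon}$ and $\mathcal{G}_\phi$ coincide from $(1,0)$ through their common tangential approach and obstacle ride up to the point $(\epsilon,\phi_c^\epsilon)$, and thus have identical times of flight on that initial segment. From $(\epsilon,\phi_c^\epsilon)$ onward, $\mathcal{G}_{\phi_c^\epsilon}$ follows exactly the strong solution with $r_c(D)=\epsilon$ to $(r,\pi)$, while $\mathcal{G}_\phi$ continues along the obstacle before departing along a different strong solution. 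Since the second segment of $\mathcal{G}_\phi$ joins the same endpoints and lies in $\mathcal{A}_0\supset \mathcal{A}_\epsilon$, and the strong solution is a global minimizer of $T$ in $\mathcal{A}_0$ by the standing hypothesis, we obtain $T[\mathcal{G}_{\phi_c^\epsilon}]\le T[\mathcal{G}_\phi]$, with strict inequality when $\phi>\phi_c^\epsilon$.

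The main technical obstacle will be the second step: verifying rigorously that the map $D\mapsto \theta_D^S(r)-\theta_D^S(r_c(D))$ is monotone in $D$, so that (i) for every $\phi\in[\phi_c^\epsilon,\pi]$ there is exactly one admissible outgoing-branch segment from $(\epsilon,\phi)$ to $(r,\pi)$, and (ii) no admissible segment exists for $\phi<\phi_c^\epsilon$. This requires exploiting the convexity of the strong solutions established in Proposition 3.4 together with the bijection from Proposition 3.3. Once this monotonicity is in hand, the remaining comparison is essentially a localized re-run of the argument used for Proposition 4.2.
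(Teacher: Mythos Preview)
Your proposal is correct and follows essentially the same route as the paper: reduce to candidates that ride the obstacle and then depart along a strong solution, argue via convexity that any admissible departure angle satisfies $\phi\ge\phi_c^\epsilon$, and then compare tails from $(\epsilon,\phi_c^\epsilon)$ using the standing hypothesis that strong solutions are global minimizers in $\mathcal{A}_0$. The paper's version is terser---it dispatches your ``second step'' with a one-line appeal to convexity rather than the monotonicity analysis you flag---but the comparison argument and its use of the hypothesis are identical.
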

\begin{proof}
Let $\alpha^*\in \mathcal{A}_{\epsilon}$ denote the candidate minimizer that leaves the obstacle at a tangent from the angular coordinate $\phi_c^{\epsilon}$ and terminates at the polar coordinate $(r,\pi)$. From the convexity of smooth solutions we know that any other candidate minimizer $\alpha\in \mathcal{A}_{\epsilon}$  terminating at the polar coordinate $(r,\pi)$ must come off the obstacle at some angular coordinate $\phi_0 > \phi_c^\epsilon$. Since, the piece of the curve $\alpha^*$ that connects the polar coordinates $(\epsilon,\phi_c^{\epsilon})$ to $(r,\pi)$ is a smooth solution curve given by Eqs. (\ref{eq:rparamaterization}-\ref{eq:thparamaterization}) it follows from our assumption that it minimizes the time of flight from polar coordinates $(\epsilon, \phi_c^{\epsilon})$ to $(r,\pi)$. Consequently, the time of flight from polar coordinates $(\epsilon,\phi_c^{\epsilon})$ to $(r,\pi)$ along $\alpha$ is larger. 

 Using the same method as in the proof of Proposition \ref{prop:minimizeAlongConstraint}, it can be shown that it follows from our assumption that smooth solutions given by Eqs. (\ref{eq:rparamaterization}-\ref{eq:thparamaterization}), that a candidate minimizer coming off the obstacle at angular coordinate $\phi_0$ has greater time of flight than one coming off at angular coordinate $\phi_c^\epsilon$.
\end{proof}

\subsubsection{Minimizers terminating on $R_4$}

Let $(\epsilon, \theta) \in R_4$. If $\theta < \theta_c^\epsilon$, there is a smooth solution given by Eqs. (\ref{eq:rparamaterization}-\ref{eq:thparamaterization}) that, by assumption, minimizes the time of flight to $(\epsilon, \theta)$. If $\theta \in (\theta_c^\epsilon, \pi - \theta_c^\epsilon)$, there exists a curve in the family Eq. (\ref{eq:obstacleFamily}) that minimizes the time of flight to $(\epsilon, \theta)$. If $\theta > \theta_c^\epsilon$, it follows from Proposition \ref{prop:R4minimizer} that there is a curve minimizing the time of flight that approaches the obstacle at a tangent and rides along until angular coordinate $\theta$. 

\begin{remark}
The solution curves connected to the boundary foliate the domain. Specifically, there are three distinct areas $A_1$, $A_2$ and $A_3$ satisfying $\mathcal{O}_\epsilon = A_1 \cup A_2 \cup A_3$ that are foliated by curves connected to the boundary in $R_2$, $R_3$ and $R_4$ respectively. This is illustrated in Figure \ref{fig:wedges}.
\end{remark}

\begin{figure}[ht!]
\centering
\begin{subfigure}[b]{0.45\textwidth}
\centering
\includegraphics[width=\textwidth]{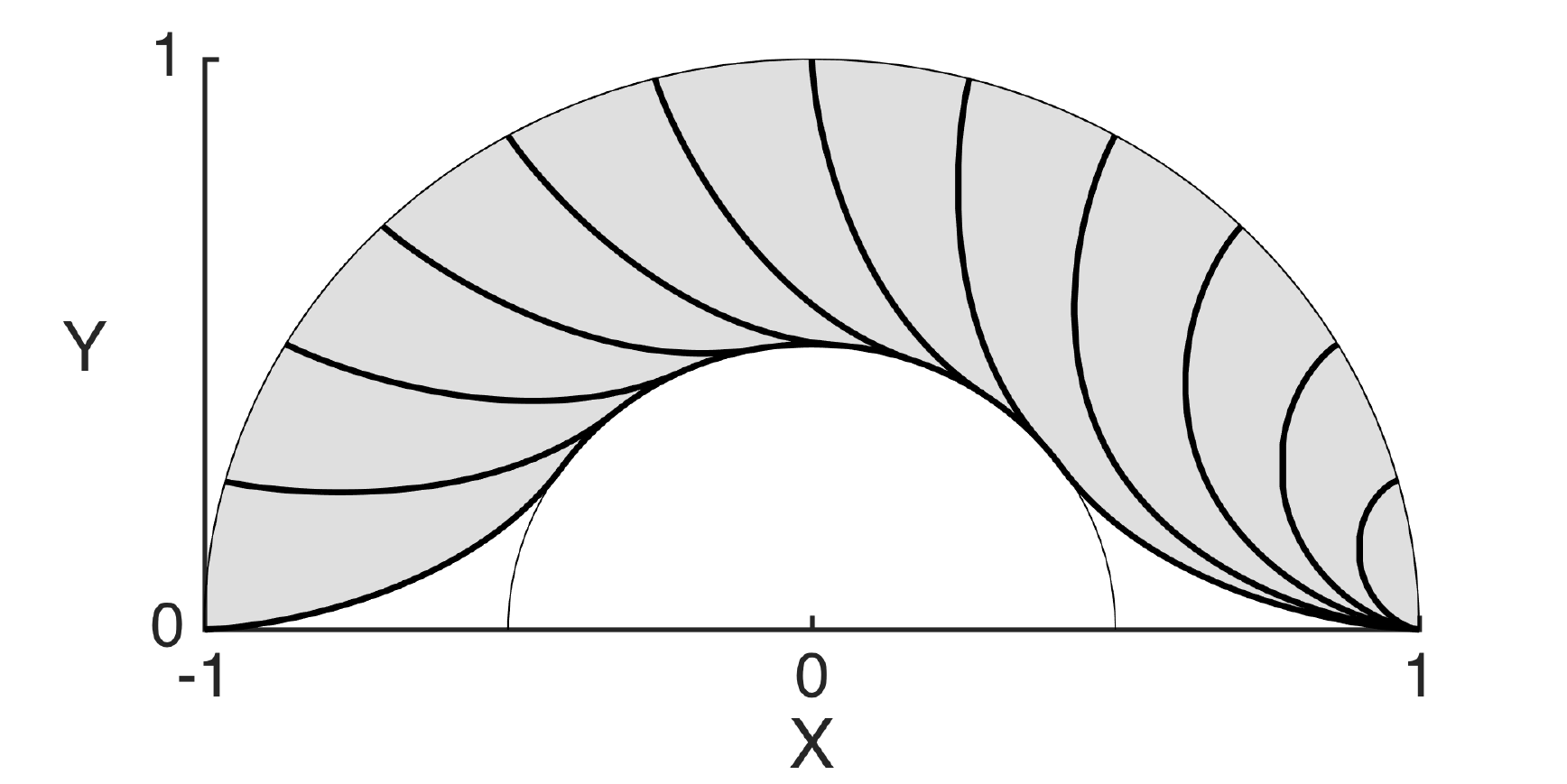}
\caption{}
\end{subfigure}
~
\begin{subfigure}[b]{0.45\textwidth}
\centering
\includegraphics[width=\textwidth]{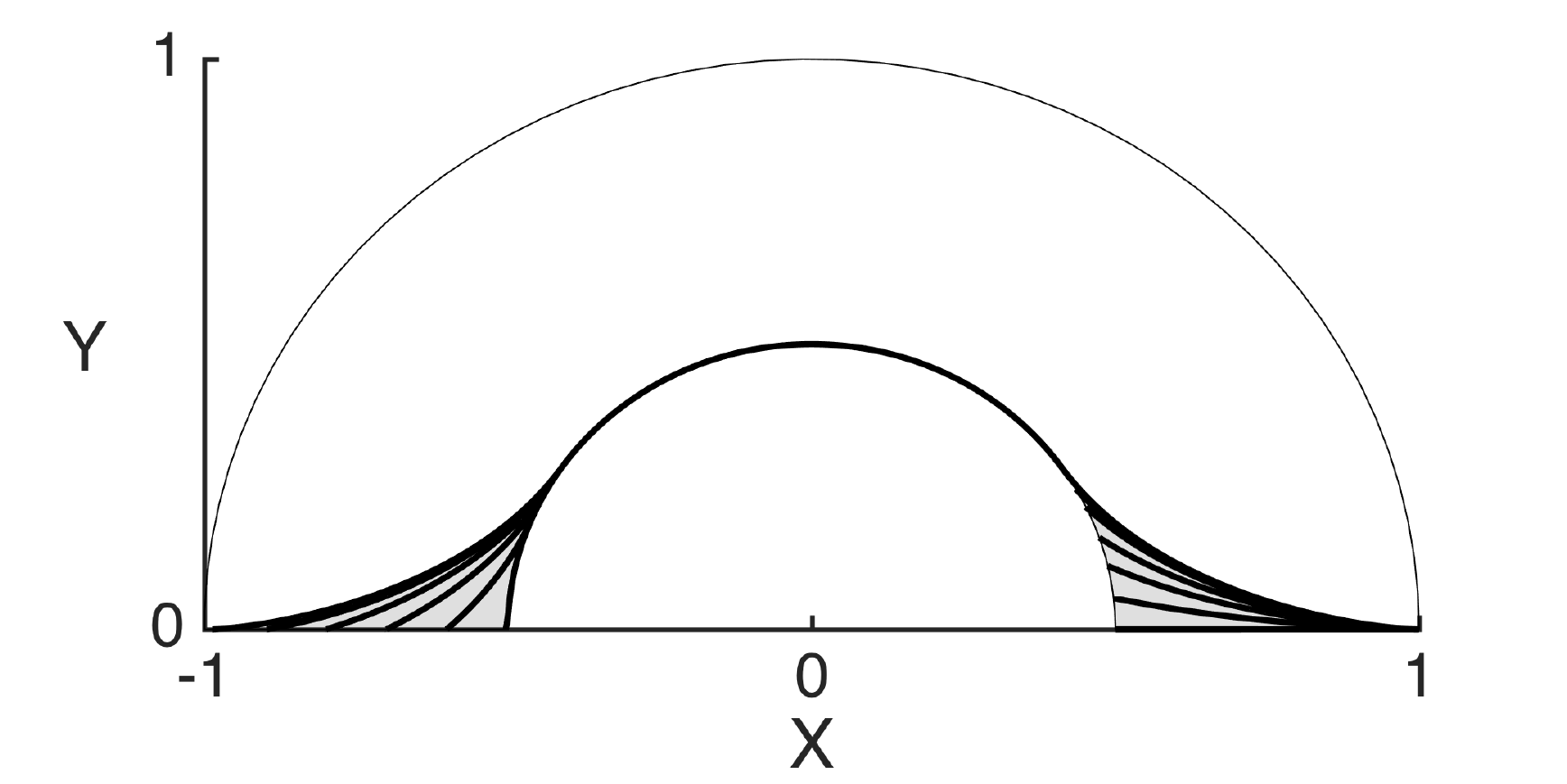}
\caption{}
\end{subfigure} 
\caption{(a) The annulus $\mathcal{O}_\epsilon$ with $A_1$ shaded in. $A_1$ consists of the set of points which lie on solution curves terminating on $R_2$. Overlaid on $A_1$ are evenly spaced solution curves terminating on $R_2$ given by Eq. (\ref{eq:obstacleFamily}). (b) The annulus $\mathcal{O}_\epsilon$ with $A_2$, $A_3$ shaded in. $A_2$ and $A_3$ consist of the sets of points which lie on solution curves terminating on $R_3$ and $R_4$ respectively. Overlaid on $A_2$ and $A_3$ are evenly spaced solution curves terminating in $R_3$ and $R_4$.}\label{fig:wedges}
\end{figure}



\subsection{Convergence to weak solutions}
In the previous section, Propositions \ref{prop:minimizeAwayFromConstraint} and \ref{prop:minimizeAlongConstraint} describe the behavior of a family of curves that minimizes $T$ to terminal polar coordinates $(1, \theta_f) \in R_2$. For a given value of $\epsilon \in (0, 1)$ and $\theta_f \in (0, \pi)$, we denote this family as 
\begin{equation} \label{eq:optimalConstrainedSolution}
\alpha_{\theta_f}^\epsilon(s) = \begin{cases}
(r^S_{D(\theta_f)}(s)\cos(\theta^S_{D(\theta_f)}(s)), r^S_{D(\theta_f)}(s)\sin(\theta^S_{D(\theta_f)}(s))) & \text{ if } \theta_f/2 \leq \theta_c^\epsilon  \\
\mathcal{F}_{D(\epsilon), \theta_f}^\epsilon(s) & \text{ if } \theta_f/2 >  \theta_c^\epsilon \end{cases}
\end{equation}
where $(r^S_{D(\theta_f)}(s), \theta^S_{D(\theta_f)}(s))$ are the radial and angular coordinates of the unique smooth solution given by Eqs. (\ref{eq:rparamaterization}-\ref{eq:thparamaterization}) and $\mathcal{F}_{D(\epsilon), \theta_f}^\epsilon(s)$ is a member of the family described by Eq. (\ref{eq:obstacleFamily}). Moreover, as $\epsilon$ approaches $0$ this family converges to the natural foliation of the unit disk described in Fig. \ref{fig:complete_foliation}(b) and given by
\begin{equation} \label{eq:optimalSolution}
\alpha_{\theta_f}(s) = \begin{cases}
(r^S_{D(\theta_f)}(s)\cos(\theta^S_{D(\theta_f)}(s)), r^S_{D(\theta_f)}(s)\sin(\theta^S_{D(\theta_f)}(s))) & \text{ if } \theta_f < 2\pi/3  \\
(r^W(s)\cos(\theta^W(s)), r^W(s)\sin(\theta^W(s))) & \text{ if } \theta_f \geq 2\pi /3
\end{cases}
\end{equation}
where $r^W(s), \theta^W(s)$ are the radial and angular coordinates of the unique smooth solution given by Eqs. (\ref{eq:rweak}-\ref{eq:tweak}) with $|B| = 1$ and $\theta_f$. This is made precise in the following proposition.

\begin{prop}
For $\theta_f \in (0, \pi)$, 
\begin{equation*}
\lim_{\epsilon \to 0}{d\left(\alpha^{\epsilon}_{\theta_f}, \alpha_{\theta_f}\right)} = 0,
\end{equation*}
where 
\begin{equation*}
d\left(\alpha_{\theta_f}, \alpha_{\theta_f}\right)=\sup_{0\leq s \leq 1}\inf_{0\leq t \leq 1}\left|\alpha^{\epsilon}_{\theta_f}(s)-\alpha_{\theta_f}(t)\right|
\end{equation*}
is the natural distance between the images of curves in the uniform norm. 
\end{prop}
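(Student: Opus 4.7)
The plan is to split on the two cases in Eq.~(\ref{eq:optimalSolution}). In Case~1 ($\theta_f<2\pi/3$), I would show that $\alpha^{\epsilon}_{\theta_f}=\alpha_{\theta_f}$ for all sufficiently small $\epsilon$, so $d\equiv 0$. The key observation is that the critical tangency angle $\theta_c^\epsilon$ converges to $\pi/3$ from below as $\epsilon\to 0$, which follows from Proposition~\ref{prop:bijection} (giving $D(\epsilon)\to 0$ via $r_c(D(\epsilon))=\epsilon$) together with the limit calculation $\bar\theta(D)\to 2\pi/3$ in the proof of Proposition~\ref{prop:characterization1}. Since $\theta_f/2<\pi/3$, eventually $\theta_c^\epsilon>\theta_f/2$, and Proposition~\ref{prop:minimizeAwayFromConstraint} identifies $\alpha^{\epsilon}_{\theta_f}$ with $(r^S_{D(\theta_f)},\theta^S_{D(\theta_f)})=\alpha_{\theta_f}$.

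Case~2 ($\theta_f\geq 2\pi/3$) is the substantive one. Here $\alpha^{\epsilon}_{\theta_f}=\mathcal{F}_{D(\epsilon),\theta_f}^{\epsilon}$ decomposes via Eq.~(\ref{eq:obstacleFamily}) into an incoming strong-solution arc from $(1,0)$ down to the obstacle at angular coordinate $\theta_c^\epsilon$, a circular segment along $\{r=\epsilon\}$ up to angle $\theta_f-\theta_c^\epsilon$, and the symmetric outgoing strong-solution arc ending at $(\cos\theta_f,\sin\theta_f)$; the target $\alpha_{\theta_f}$ is the two-segment radial ``V'' through the origin. Because $d$ is the one-sided Hausdorff distance, it suffices to show that each piece of $\alpha^{\epsilon}_{\theta_f}$ lies in a shrinking neighborhood of the image of $\alpha_{\theta_f}$. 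The circular segment is trivial since each point has norm $\epsilon$, hence lies within $\epsilon$ of the origin; by reflection symmetry (cf. Proposition~\ref{prop:symmetry} and Eq.~(\ref{eq:obstacleFamily})) it is enough to control the incoming strong-solution piece.

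For that piece I would fix $\delta>0$ and split at $r=\delta$. Points with $r\in[\epsilon,\delta]$ lie within $\delta$ of the origin, which is on $\alpha_{\theta_f}$. For $r\in[\delta,1]$, the plan is to show $\theta=O(\sqrt{D(\epsilon)})$ uniformly: once $D$ is small enough that $2D(1-\delta)\leq \frac{1}{2}\delta^3$, the factor $u^3-2D(1-u)$ appearing inside the integrand of Eq.~(\ref{eq:thparamaterization}) is bounded below by $\frac{1}{2}\delta^3$ on $[\delta,1]$, so the full integrand is dominated by $C\,u^{-1}\sqrt{(1-u)D}\,\delta^{-3/2}$, yielding $\theta\leq C(\delta)\sqrt{D}$ after integration. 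The nearest point on the incoming radial segment of $\alpha_{\theta_f}$ to $(r\cos\theta,r\sin\theta)$ is $(r\cos\theta,0)$, at distance $r\sin\theta\leq\theta\leq C(\delta)\sqrt{D(\epsilon)}$. Since $D(\epsilon)\to 0$ as $\epsilon\to 0$ by Proposition~\ref{prop:bijection}, I would first choose $\delta$ small and then $\epsilon$ so small that $C(\delta)\sqrt{D(\epsilon)}<\delta$, giving $d(\alpha^{\epsilon}_{\theta_f},\alpha_{\theta_f})\leq\delta$; sending $\delta\to 0$ completes the proof.

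The main obstacle is precisely this quantitative angular estimate: the integrand of Eq.~(\ref{eq:thparamaterization}) has an inverse-square-root singularity at $u=r_c(D)=\epsilon$, so one cannot hope to bound $\theta$ uniformly on the whole arc. Splitting at a radius $\delta$ that is fixed independently of $\epsilon$, and only later sending $\epsilon\to 0$, is what makes the argument work; verifying that the truncated integral genuinely scales like $\sqrt{D}$, with a constant $C(\delta)$ one can control explicitly, is the delicate step of the proof.
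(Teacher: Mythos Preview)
Your proposal is correct and follows the same overall two-case split as the paper. Case~1 matches the paper's argument essentially verbatim.

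In Case~2 you and the paper diverge in how the incoming strong-solution arc is controlled. The paper does not analyze the integral in Eq.~(\ref{eq:thparamaterization}) at all; instead it invokes convexity of the smooth solution curves to trap the entire incoming arc inside the rectangle $\mathcal{R}_\epsilon=[\epsilon\cos\theta_c^\epsilon,1]\times[0,\epsilon\sin\theta_c^\epsilon]$, whose height is at most $\epsilon$, so the distance to the radial segment $[0,1]\times\{0\}$ is bounded by $\epsilon$ directly. This is a one-line geometric estimate with no need for an auxiliary scale $\delta$ or a two-step limit. Your route---splitting at $r=\delta$, bounding the truncated integral by $C(\delta)\sqrt{D(\epsilon)}$, and then sending first $\epsilon\to 0$ and afterward $\delta\to 0$---is sound and has the advantage of being entirely self-contained (it does not rely on the unproved convexity of the strong solutions, which the paper uses repeatedly but never establishes). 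The cost is a slightly more technical argument and a weaker quantitative rate. Both arguments handle the circular segment and the outgoing arc by the same symmetry reduction.
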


\begin{proof}
Let $\theta_f \in (0, \pi)$ and define the sequence of functions $\alpha_{\theta_f}^\epsilon(s)$ by Eq. (\ref{eq:optimalConstrainedSolution}). 

\begin{enumerate}
\item If $\theta_f < 2\pi/3$, then there exists some $D(\theta_f) > 0$ such that the smooth solution $(r_{D(\theta_f)}^S(s), \theta_{D(\theta_f)}^S(s))$ given by Eqs. (\ref{eq:rparamaterization}-\ref{eq:thparamaterization}) terminates at $(1, \theta_f)$. From Proposition \ref{prop:characterization2} it follows that $\lim_{\epsilon\rightarrow 0}\theta_c^\epsilon  = \pi/3 \geq \theta_f/2$. Therefore, there exists $\epsilon^*$ such that $\epsilon < \epsilon^* \implies \alpha_{\theta_f}^\epsilon = \alpha_{\theta_f}$. Thus, for $\theta_f < 2\pi/3$
\begin{equation*}
\lim_{\epsilon \to 0}{d\left(\alpha_{\theta_f}^{\epsilon}, \alpha_{\theta_f}\right)} = 0.
\end{equation*}

\item If $\theta_f \geq 2\pi/3$,  $\alpha_{\theta_f}^\epsilon$ approaches the obstacle tangentially at $(r, \epsilon_c^\epsilon)$ along the path of a smooth solution for $s \in [0, 1/3]$. It follows from the convexity of smooth solutions that $\alpha_{\theta_f}^\epsilon([0, 1/3])$ is contained in the rectangular region $\mathcal{R}_\epsilon = \{(x, y) \in \mathbb{R}^2 : x \in [\epsilon\cos(\theta_c^\epsilon), 1], y \in [0, \epsilon\sin(\theta_c^\epsilon)]\}$. As $\epsilon \to 0$, $\mathcal{R}_\epsilon$ limits to the line $\{(x, y) \in \mathbb{R}^2 : x \in [0, 1], y = 0\}$. Moreover, $\alpha_{\theta_f}^\epsilon([1/3, 2/3])$ is on the obstacle and consequently limits to the origin as $\epsilon \to 0$. It follows immediately from radial symmetry that a rotated rectangular region can be constructed around $\alpha_{\theta_f}^\epsilon([2/3, 1])$ that limits to the line $\theta = \theta_f$. Hence each point on $\alpha_{\theta_f}^\epsilon$ limits to a point along the weak solution $(r^W(s)\cos(\theta^W(s)), r^W(s)\sin(\theta^W(s)))$ and thus for $\theta_f > 2\pi/3$
\begin{equation*}
\lim_{\epsilon \to 0}{d\left(\alpha_{\theta_f}^\epsilon, \alpha_{\theta_f}\right)} = 0.
\end{equation*}
\end{enumerate}
\end{proof}

\begin{figure}[ht!]
\centering
\begin{subfigure}[b]{0.45\textwidth}
\centering
\includegraphics[width=\textwidth]{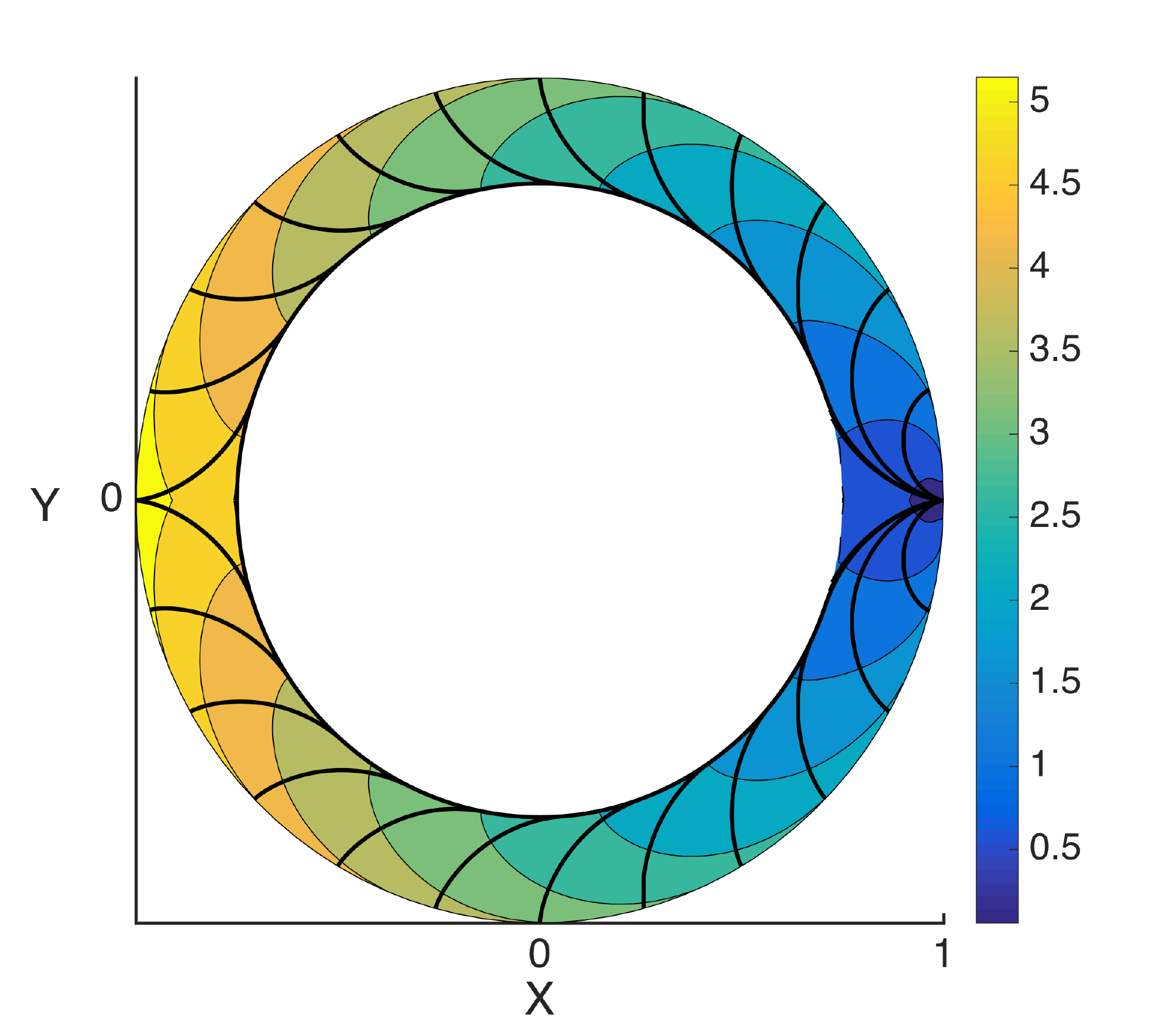}
\caption{}
\end{subfigure} 
~
\begin{subfigure}[b]{0.45\textwidth}
\centering
\includegraphics[width=\textwidth]{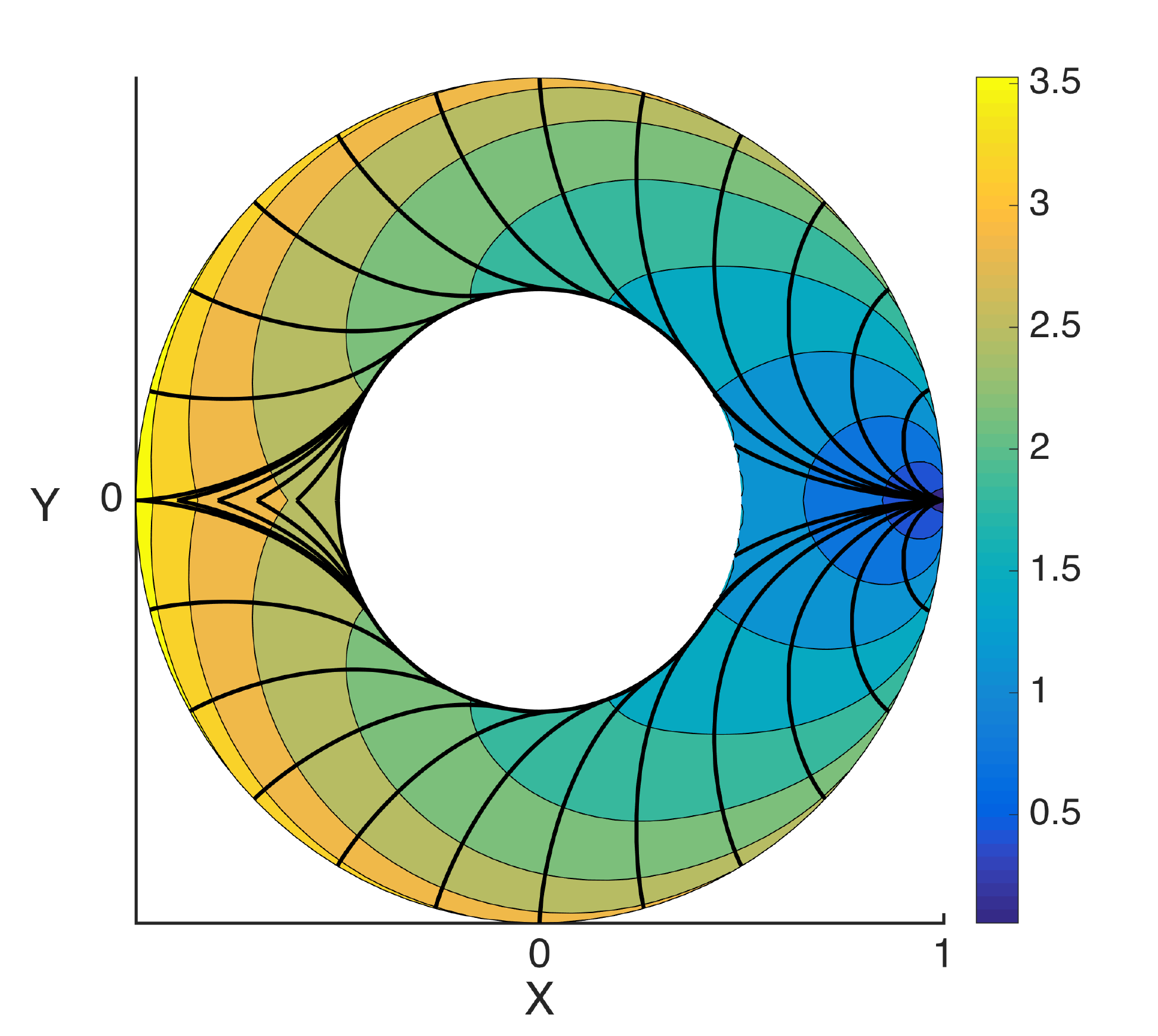}
\caption{}
\end{subfigure}
~
\begin{subfigure}[b]{0.45\textwidth}
\centering
\includegraphics[width=\textwidth]{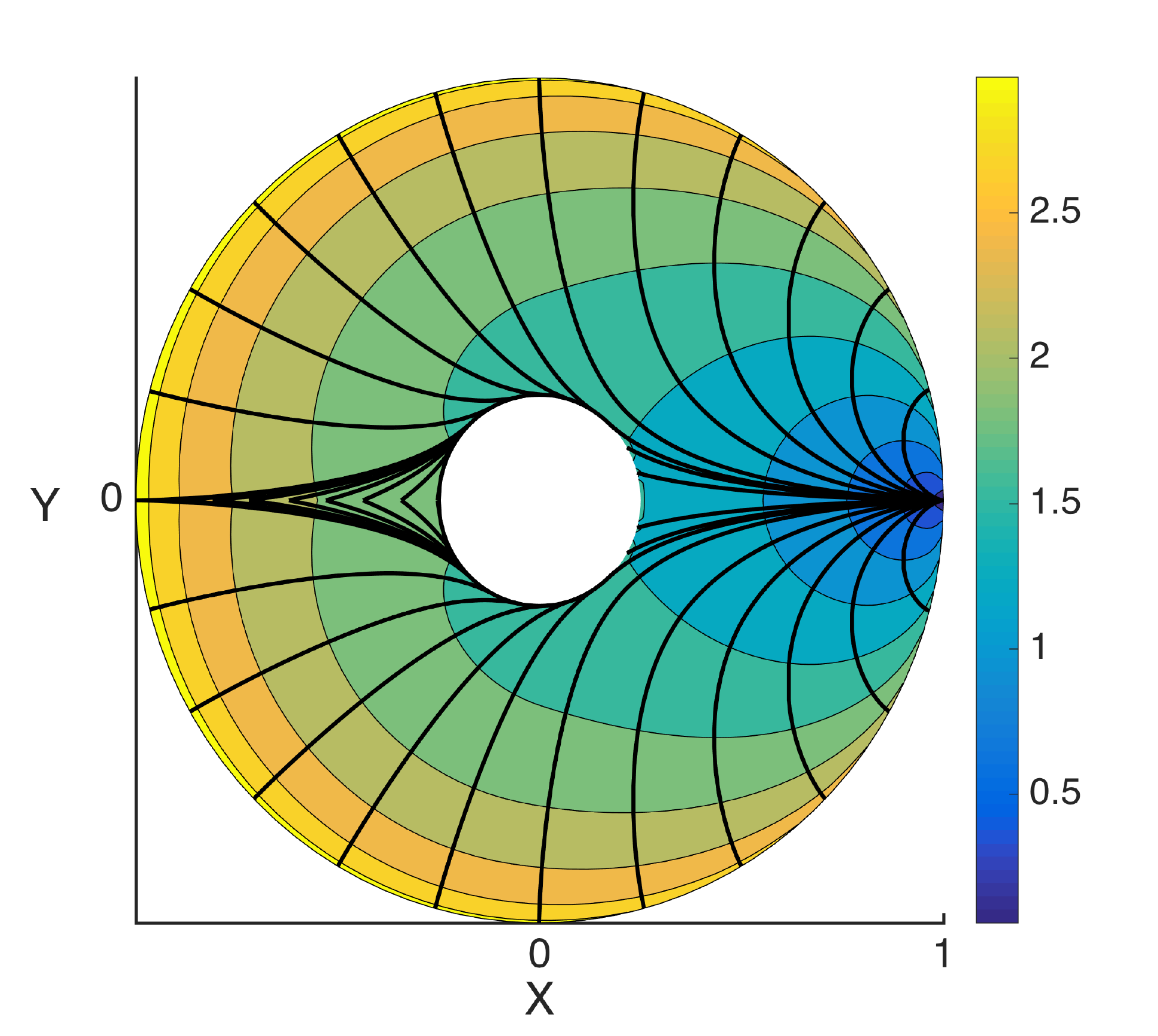}
\caption{}
\end{subfigure}
~
\begin{subfigure}[b]{0.45\textwidth}
\centering
\includegraphics[width=\textwidth]{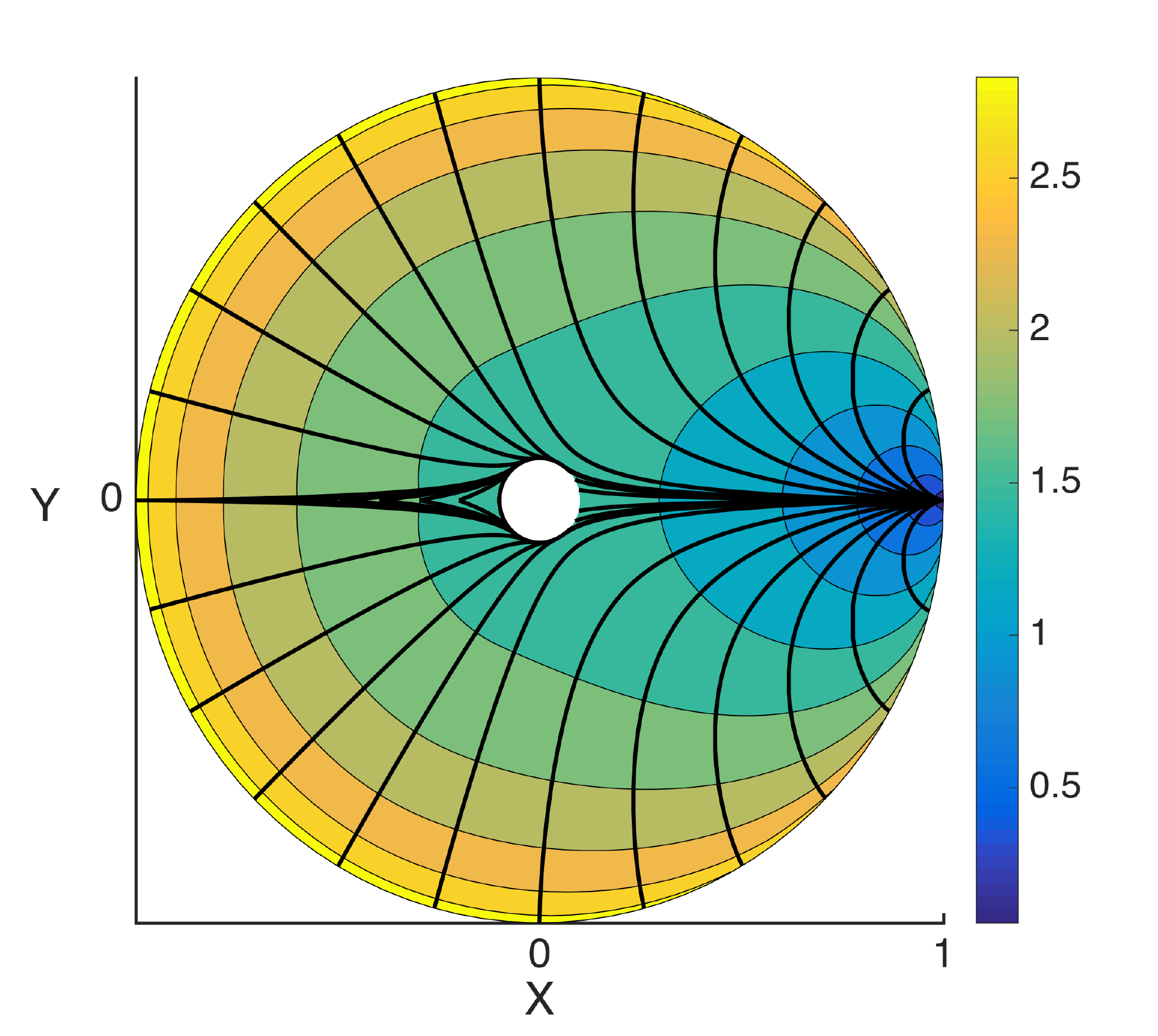}
\caption{}
\end{subfigure}

\caption{(a-d) A foliation of the annuli $\mathcal{O}_{0.75}$, $\mathcal{O}_{0.5}$, $\mathcal{O}_{0.25}$, $\mathcal{O}_{0.1}$ by evenly spaced solution curves terminating on the boundary of the annulus.  Beneath the solution curves of each subfigure is a contour plot of the time of flight from $(1, 0)$ to each point on the annulus by solution curves of the form given by Eq. (\ref{eq:optimalConstrainedSolution}).}
\label{fig:contourAnnulus}
\end{figure}

The solution curves depicted in Figure \ref{fig:contourAnnulus} again intersect the level-sets of the value function orthogonally. This is consistent with Eq. (\ref{Eq:Ray1}), i.e. $\mathcal{V}$ satisfies the eikonal equation defined by Eq. (\ref{Eq:Eikonal}) on an annular domain.

\section{Discussion and Conclusion}

In this paper we have solved the brachistochrone problem in the inverse-square gravitational field. Namely, we have constructed solutions that enter the so-called forbidden region first mentioned in \cite{parnovsky1998,Tee99,Gemmer2011}. Furthermore we considered the constrained problem where solutions are restricted to lie outside of a ball around the origin. This restricted problem is more physically relevant since it avoids the particle experiencing infinite acceleration at the origin. Moreover, the solutions in the annular domain recover our prior solutions on the disk in the limit of vanishing inner radius. Consequently these solutions on the annular domain correspond to ``regularized'' brachistochrone solutions that avoid the singularity. 

In the future, this work could be extended to problems with multiple singularities. That is, a natural extension of this work is to consider brachistochrone problems with multiple point sources of gravity. Natural questions to consider would be what role if any does the existence of a forbidden region play in the selection of strong or weak solutions. If weak solutions do exist, we conjecture that they would form a network of strong solutions patched together at singularities of the gravitational field. We expect that many of our results would hold locally near a singularity. However, by adding multiple singularities we break the radial invariance which we exploited to explicitly construct global solutions. 

We also should mention that we have only considered necessary conditions for optimality. Specifically, this problem is not completely solved in the modern sense without a proof of the existence of a minimizer. This is not a trivial task since the functional is not coercive and is not convex at the singular origin and hence the direct method of the calculus of variations cannot be applied. We conjecture that the general results for non-coercive integrals presented in \cite{botteron1991general} or the technique of convex rearrangement presented in \cite{greco2012minimization} can be adapted to prove existence on the annular domain. Consequently, we expect that we could prove an existence result on the entire disk by considering the limit of vanishing inner radius. 


\section*{Acknowledgments}
We like to acknowledge Zachary Nado for many fruitful conversations. CG would also like to thank the MAA for providing travel support to attend Mathfest 2015 where this work was first presented. JG is currently supported by NSF-RTG grant DMS-1148284.

\bibliographystyle{siamplain}
\bibliography{references}

\def\cprime{$'$}
\begin{thebibliography}{10}

\bibitem{audoly2003self}
{\sc B.~Audoly and A.~Boudaoud}, {\em Self-similar structures near boundaries
  in strained systems}, Physical review letters, 91 (2003), p.~086105.

\bibitem{bertsekas1995dynamic}
{\sc D.~P. Bertsekas, D.~P. Bertsekas, D.~P. Bertsekas, and D.~P. Bertsekas},
  {\em Dynamic programming and optimal control}, vol.~1, Athena Scientific
  Belmont, MA, 1995.

\bibitem{bhattacharya2003microstructure}
{\sc K.~Bhattacharya}, {\em Microstructure of martensite: why it forms and how
  it gives rise to the shape-memory effect}, vol.~2, Oxford University Press,
  2003.

\bibitem{blaasjo2005isoperimetric}
{\sc V.~Bl{\aa}sj{\"o}}, {\em The isoperimetric problem}, The American
  Mathematical Monthly, 112 (2005), pp.~526--566.

\bibitem{botteron1991general}
{\sc B.~Botteron and P.~Marcellini}, {\em A general approach to the existence
  of minimizers of one-dimensional non-coercive integrals of the calculus of
  variations}, in Annales de l'IHP Analyse non lin{\'e}aire, vol.~8, 1991,
  pp.~197--223.

\bibitem{broer2014bernoulli}
{\sc H.~W. Broer}, {\em Bernoulli's light ray solution of the brachistochrone
  problem through hamilton's eyes}, International Journal of Bifurcation and
  Chaos, 24 (2014), p.~1440009.

\bibitem{dunham1990journey}
{\sc W.~Dunham}, {\em Journey through genius}, Wiley, 1990.

\bibitem{erlichson1999johann}
{\sc H.~Erlichson}, {\em Johann bernoulli's brachistochrone solution using
  fermat's principle of least time}, European journal of physics, 20 (1999),
  p.~299.

\bibitem{evans1998partial}
{\sc L.~Evans}, {\em Partial Differential Equations}, Graduate studies in
  mathematics, American Mathematical Society, 1998.

\bibitem{gemmer2016isometric}
{\sc J.~Gemmer, E.~Sharon, T.~Shearman, and S.~C. Venkataramani}, {\em
  Isometric immersions, energy minimization and self-similar buckling in
  non-euclidean elastic sheets}, arXiv preprint arXiv:1601.06863,  (2016).

\bibitem{Gemmer2011}
{\sc J.~A. Gemmer, M.~Nolan, and R.~Umble}, {\em Generalizations of the
  brachistochrone problem.}, Pi Mu Epsilon Journal, 13 (2011), pp.~207--218.

\bibitem{goldstein2014classical}
{\sc H.~Goldstein, C.~P. Poole, and J.~L. Safko}, {\em Classical Mechanics:
  Pearson New International Edition}, Pearson Higher Ed, 2014.

\bibitem{greco2012minimization}
{\sc A.~Greco}, {\em Minimization of non-coercive integrals by means of convex
  rearrangement}, Advances in Calculus of Variations, 5 (2012), pp.~231--249.

\bibitem{DeSimone2000}
{\sc R.~Kohn, A.~DeSimone, S.~Muller, and F.~Otto}, {\em Magnetic
  microstructures - a paradigm of multiscale problems}, Oxford University
  Press, 2000, pp.~175--190.

\bibitem{kohn2006energy}
{\sc R.~V. Kohn}, {\em Energy-driven pattern formation}, in Proceedings oh the
  International Congress of Mathematicians: Madrid, August 22-30, 2006: invited
  lectures, 2006, pp.~359--383.

\bibitem{leoni2009first}
{\sc G.~Leoni}, {\em A first course in Sobolev spaces}, vol.~105, American
  Mathematical Society Providence, RI, 2009.

\bibitem{mccleary2012geometry}
{\sc J.~McCleary}, {\em Geometry from a differentiable viewpoint}, Cambridge
  University Press, 2012.

\bibitem{muller1999variational}
{\sc S.~M{\"u}ller}, {\em Variational models for microstructure and phase
  transitions}, Springer, 1999.

\bibitem{nakane2002early}
{\sc M.~Nakane and C.~G. Fraser}, {\em The early history of hamilton-jacobi
  dynamics 1834--1837}, Centaurus, 44 (2002), pp.~161--227.

\bibitem{oprea2000mathematics}
{\sc J.~Oprea}, {\em The mathematics of soap films: explorations with Maple},
  vol.~10, American Mathematical Soc., 2000.

\bibitem{ortiz1994morphology}
{\sc M.~Ortiz and G.~Gioia}, {\em The morphology and folding patterns of
  buckling-driven thin-film blisters}, Journal of the Mechanics and Physics of
  Solids, 42 (1994), pp.~531--559.

\bibitem{parnovsky1998}
{\sc A.~Parnovsky}, {\em Some generalisations of brachistochrone problem}, Acta
  physica Polonica. A, 93 (1998).

\bibitem{royden2010real}
{\sc H.~Royden and P.~Fitzpatrick}, {\em Real Analysis}, Featured Titles for
  Real Analysis Series, Prentice Hall, 2010.

\bibitem{sagan2012introduction}
{\sc H.~Sagan}, {\em Introduction to the Calculus of Variations}, Courier
  Corporation, 2012.

\bibitem{sharon2007geometrically}
{\sc E.~Sharon, B.~Roman, and H.~L. Swinney}, {\em Geometrically driven
  wrinkling observed in free plastic sheets and leaves}, Physical Review E, 75
  (2007), p.~046211.

\bibitem{song2008analytical}
{\sc J.~Song, H.~Jiang, W.~Choi, D.~Khang, Y.~Huang, and J.~Rogers}, {\em An
  analytical study of two-dimensional buckling of thin films on compliant
  substrates}, Journal of Applied Physics, 103 (2008), p.~014303.

\bibitem{sussmann1997300}
{\sc H.~J. Sussmann and J.~C. Willems}, {\em 300 years of optimal control: from
  the brachystochrone to the maximum principle}, Control Systems, IEEE, 17
  (1997), pp.~32--44.

\bibitem{Tee99}
{\sc G.~J. Tee}, {\em Isochrones and brachistochrones}, Neural Parallel Sci.
  Comput., 7 (1999), pp.~311--341.

\bibitem{witten2007stress}
{\sc T.~Witten}, {\em Stress focusing in elastic sheets}, Reviews of Modern
  Physics, 79 (2007), p.~643.

\end{thebibliography}
\end{document}